\documentclass{amsart}
\usepackage{amssymb,amsmath,amsrefs}
\usepackage[colorlinks=true]{hyperref}
\hypersetup{urlcolor=blue, citecolor=green}
\usepackage[dvips]{graphicx}
\usepackage{color}

\usepackage{float}
\bibliographystyle{amsplain}

\theoremstyle{plain}

\newtheorem{mainthm}{Theorem}

\newtheorem{mainclly}[mainthm]{Corollary}
\newtheorem{conj*}{Conjecture}
\newtheorem*{cor*}{Corollary}
\newtheorem{theorem}{Theorem}[section]

\newtheorem{lemma}[theorem]{Lemma}

\newtheorem{question}{Question}

\theoremstyle{definition}
\newtheorem*{def*}{Definition}

\newtheorem{rmk}[theorem]{Remark}
\newtheorem{example}[theorem]{Example}

\newtheorem{definition}[theorem]{Definition}

\newcommand{\be} {\beta}        
    
\newcommand{\de} {\delta}

\renewcommand{\epsilon}{\varepsilon}

\newcommand{\si} {\sigma}

\newcommand{\Z}{\mathbb{Z}}

\newcommand{\R}{\mathbb{R}}
\newcommand{\eps}{\varepsilon}

%\newcommand{\SN}{{\cal N}}
%newcommand{\cqd}{\rule{10pt}{10pt}}

\title{On the Shadowableness of Flows With  Hyperbolic Singularities}

\author{Alexander Arbieto}
\address{Instituto de Matem\'{a}tica, Universidade Federal do Rio de Janeiro, Rio de Janeiro, Brazil}

\email{arbieto@im.ufrj.br}

\author{Andr\'{e}s M. L\'{o}pez}
\address{ Departamento de Matem\'{a}tica, Universidade Federal Rural do Rio de Janeiro, Serop\'{e}dica, Brazil.}

\email{andresmlopezb@gmail.com
}

\author{Elias Rego}

\address{Department of Mathematics, Southern University of science and Technlogy, Shenzhen, Gaungdong , China}

\email{rego@sustech.edu.cn}

\author{Yeison S\'{a}nchez}

\address{Departamento de matem\'{a}ticas, Universidad Nacional de Colombia, Bogot\'{a}, Colombia
}

\email{yasanchezr@unal.edu.co
}

\thanks{Primary MSC code: 37C10, secundary MSC code:37C50. Keywords: Flows, shadowing property, hyperbolic singularities, rescaled-shadowing property. 
The first author was partially supported by CNPq, FAPERJ and
PRONEX/DS from Brazil}

\begin{document}

\maketitle

\begin{abstract}
In this work we study the existence of singular flows satisfying shadowing-like properties. More precisely, we prove that if $C^1$-vector field on a closed manifold induces a chain-recurrent flow containing an attached hyperbolic singularity of stable or unstable index-one, then this flow cannot satisfy the shadowing property. If the manifold is non-compact, the vector field is complete and non-wandering, we prove that we prove that the existence of index-one hyperbolic singularities prevents the induced flow to satisfy the rescaled-shadowing property introduced in \cite{JNY}.

\end{abstract}

\section{Introduction}\label{intro}

The shadowing property, also called as the pseudo-orbit tracing property, is an important concept in the theory of dynamical systems. Roughly speaking, it says that any sequence of pieces of orbits (a pseudo-orbit), with sufficiently small jumps can be approximated by a real orbit (a shadow), see Section \ref{sectionstatements} for the precise definition. The shadowing property has an intrinsic relation with the stability of the dynamics, since the orbit of a nearby system is a pseudo-orbit of the original system. Thus, the shadows generated by these processes are the natural candidates to construct a conjugacy between the systems. As a consequence, the shadowing property has many interesting practical consequences in applied sciences. 	

The shadowing property became a subject of interest also due to its relations with the notion of hyperbolicity, introduced by S. Smale \cite{S}. The theory of hyperbolic dynamical systems is a landmark in the dynamical systems theory.  In the past decades hyperbolicity has proven to be a central subject  and a huge source of systems with rich dynamical behaviour.
 It was proved that hyperbolic systems have the shadowing property. More precisely, semi-locally, it was proved that hyperbolic basic sets have the shadowing property and,  globally, Axiom A diffeomorphisms with the strong transversality condition also have the shadowing property. As a consequence, such systems are structurally stable. Actually, K. Sakai showed that structural stable diffeomorphisms have the shadowing property, see \cite{Sak1} and \cite{Sak2}. Moreover, S. Pilyugin showed in \cite{Py1} that structurally stable diffeomorphisms actually satisfies a stronger form of shadowing, called the Lipschitz shadowing property. Reciprocally, even in the topological scenario, it was proved that an expansive homeomorphism with the shadowing property is topologically stable, a weaker form of stability, see \cite{AH}. For versions of these results for vector fields, we refer the reader to  \cite{Py2}.

However, there are other important dynamical systems which have robust dynamical properties but they are not structurally stable \cite{Wi}. The paradigmatic example is the Geometric Lorenz Attractor, introduced independently in \cite{ABS} and \cite{Gu1}, which is a robustly transitive attracting set, with many interesting properties, even topological: they are expansive \cite{K1}, or statistical: they have physical and SRB measures \cite{APPV}, among others. Theses properties were studied in a more general context:  the sectional hyperbolic systems, introduced by Metzger and  Morales in \cite{Mo3}; and there is a well developed theory on them nowadays.

Since such systems are not stable, we could suspect that they also do not have the shadowing property. Indeed, it was proved in \cite{K2} by M. Komuro  that the Geometric Lorenz attractor does not has the shadowing property. This was extended in \cite{WW},  where L. Wen and X. Wen proved that if a  chain-transitive sectional hyperbolic set has the shadowing property, then they must be hyperbolic. We remark that the sectional hyperbolic structure on the tangent bundle is strongly used in their proof.

  When one deals with continuous-time systems, there is a typical phenomenon that represents an obstruction to hyperbolicity: the existence of attached singularities, i.e., singularities approximated by regular orbits. Actually, it is the main difference between the sectional hyperbolic and the hyperbolic theories.  Indeed, in the presence of such singularities, a dynamical system cannot be hyperbolic. So, in this paper we push further this philosophy, proving that the presence of such singularities is enough to rule out the shadowing property requiring only pointwise hyperbolicity at the singularities and not any tangent structure globally. In the next section we will precisely state our main results.

 \section{Statement of the Results}\label{sectionstatements} 
 
 In this section we will state our main results. First let us introduce the setting that we will work on. Throughout this work $M$  denotes a closed and smooth Riemannian manifold (unless otherwise stated). Let $d$ denote the metric on $M$ induced by its Riemannian metric. We denote by $X$ a $C^1$-vector field over $M$. Recall that $X$ induces $C^1$-flow on $M$ that will be denoted by $X_t$. We say that $\sigma\in M$ is a \textit{singularity} of $X$ if $X(\sigma)=0$. We say that $x\in M$ is a \textit{regular point} of $X$ if $x$ is not a singularity. A set  $\Lambda\subset M$ is said to be \textit{invariant} if $X_t(\Lambda)=\Lambda$, for every $t\in \R$.  We say that a singularity of $X$ is \textit{attached to $\Lambda$} if it is accumulated by regular points of $\Lambda$. 
 For $x\in M$ we define the orbit of $x$ to be the set $$O(x)=\{X_t(x);t\in \mathbb{R}\}.$$ 

 Next we define the main subject of this work. Let  $\de,T>0$.  We say that a sequence $(x_i,t_i)_{a}^b$, with $-\infty\leq a<b\leq \infty$, is a $\de$-$T$-pseudo orbit if $t_i\geq T$ and $$d(X_{t_i}(x_i),x_{i+1})\leq \de,$$ for every $a\leq i\leq b$. When  $a$ and $b$ are finite, we say that $(x_i,t_i)_{a}^b$ is a finite $\de$-$T$-pseudo orbit. Let us fix the following notation $$Rep=\left\{h\in\mathcal{H}^+(\mathbb{R}); h(0)=0\right\},$$ 

 where $\mathcal{H}^+(\mathbb{R})$ denotes the set of increasing homeomorphisms from $\R$ to $\R$.

A map belonging to the  previous set is called a reparametrization.  Given a $\de$-$T$-pseudo-orbit,  define the sequence $(s_i)$ inductively by $$s_i=\begin{cases}0, \textrm{ if } i=0\\
t_i+s_{i-1},\textrm{ if } i>0\\
-t_i+s_{i+1}, \textrm{ if } i<0
\end{cases}.$$   We say that a $\de$-$T$-pseudo orbit $(x_i,t_i)$ is \textit{$\eps$-shadowed} if there are $y\in M$ and $h\in Rep$ such that $$d(X_{h(t)}(y),X_{t}(x_i))\leq \eps,$$ for $t\in [s_i,s_{i+1})$. 

\begin{definition}[Shadowing property]
 Let  $\Lambda$ be a  closed and invariant subset of $M$. We say that $X$ has the  shadowing property on $\Lambda$ if for every $\eps>0$, there is some $\de>0$ such that every $\de$-$1$-pseudo-orbit $(x_i,t_i)_a^b$, satisfying $x_i\in \Lambda$ for every $a\leq i\leq b$, is $\eps$-shadowed by some point in $\Lambda$. We say that $X$ has the global shadowing property if $X$ has the shadowing property on $M$.
\end{definition}

 We say that a closed and invariant subset $\Lambda$ is chain-recurrent if for every $x\in \Lambda$ and every $\eps>0$, there is a finite $\eps$-$1$-pseudo orbit $(x_i,t_i)_{0}^N$ such that $x_0=x_N=x$. Let $\sigma$  be a singularity of  $X$. Recall that  the \textit{stable set}  and  the \textit{unstable  set} of $\sigma$ are respectively defined by 

 $$W^s(\sigma)=\{x\in M; d(X_t(x),\sigma)\to 0, t\to+\infty\}$$
  \begin{center}
     and
 \end{center}
$$W^u(\sigma)=\{x\in M; d(X_t(x),\sigma)\to 0, t\to-\infty\}$$ 

The next concept will be central in this work.
\begin{definition}
We say that the orbit $O(p)$ of a regular point $p$ is a homoclinic loop if there is a singularity $\sigma$ such that $p\in W^s(\sigma)\cap W^u(\sigma)$. In this case, we say that $O(p)$ is a homoclinic loop attached to $\sigma$. For a given singularity $\sigma$, let us denote by $HL(\sigma)$ the set of homoclinic loops attached to $\sigma$. 
\end{definition}

Essentially, a point in a homoclinic loop is born and dies in the same singularity.  As we will see on the next sections, the shadowing property together with some form recurrency nearby a hyperbolic singularity  allow us to create many distinct attached homoclinic loops. Another important concept that will be used is the concept of hyperbolic singularity which is defined as follows.   

\begin{definition}
A singularity $\sigma$ of $X$ is hyperbolic if there exist $C,\lambda>0$ such that 
\begin{enumerate}
\item $T_{\sigma}M= E^s\oplus E^u$, where $E^s$ and $E^u$ are $DX$-invariant, 
\item $||{DX_t}|_{E^s}|| \leq Ce^{-\lambda t}$ and $||{DX_{-t}}|_{E^u}|| \leq Ce^{-\lambda t}$, for every $t\geq 0$.
\end{enumerate}
\end{definition}

It is an immediate consequence of the celebrated Hartman-Grobman theorem  that hyperbolic singularities are dynamically isolated, i.e, there is a neighborhood $U$ of $\sigma$ such that $\sigma$ is the only singularity in $U$ and for any regular point $x\in U$ there is $t\in \R$ such that  $X_t(x)\notin U$ (see Section \ref{prel} for more details). As a consequence, if $\sigma$ is hyperbolic and $O(x)\in HL(\sigma)$, then one can consider very  neighborhoods $V$ of $\sigma$ such that $$\#\partial V\cap O(X)\leq \infty.$$ Here, $\partial V$ denotes the border of $V$. We call a neighborhood $V$ of $\sigma$ satisfying the previous condition an adapted neighborhood of $\sigma$.  
Thus, if $\sigma$ is a hyperbolic singularity attached to a invariant set $\Lambda$, then we can define for such an adapted $V$ of $\sigma$ in $\Lambda$ and  $O(x)\in HL(\sigma)$ the following number  $$n(x,V,\sigma)=\#\partial V\cap O(x).$$ 
The numbers $n(x,V,\sigma)$ will be important to our first main result which provides an obstruction for flows with hyperbolic singularities to have the shadowing property.  

\begin{mainthm}\label{fewhomocliniloops}
    Let $\Lambda$ be a  chain-recurrent set for a $C^1$-vector filed $X$.   Let $\sigma\in \Lambda$ be a hyperbolic singularity attached  to $\Lambda$. If for every adapted neighborhood $V$  of $\sigma$ there is $N>0$ such that $n(p,V,\sigma)\leq N$ for every $p\in HL(\sigma)$, then $\Lambda$ has not the shadowing  property.
\end{mainthm}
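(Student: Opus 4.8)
The plan is to prove the contrapositive: assuming $X$ has the shadowing property on $\Lambda$, I would exhibit a fixed small adapted neighborhood $V$ of $\sigma$ and homoclinic loops $O(p_m)\in HL(\sigma)$ with $n(p_m,V,\sigma)\to\infty$, contradicting the hypothesis. I take $V$ small enough that the Grobman--Hartman picture applies: $W^{s}_{\mathrm{loc}}(\sigma)=\{x\in\ov V:X_t(x)\in\ov V\ \forall\,t\ge0\}$ and $W^{u}_{\mathrm{loc}}(\sigma)$ are embedded discs meeting only at $\sigma$, points of $W^{u}_{\mathrm{loc}}(\sigma)$ (resp.\ $W^{s}_{\mathrm{loc}}(\sigma)$) tend to $\sigma$ as $t\to-\infty$ (resp.\ $t\to+\infty$), no full $X$-orbit other than $\{\sigma\}$ lies in $\ov V$, and --- the one quantitative input --- for $\eps$ small an orbit staying $\eps$-close to $\sigma$ for all $t\le0$ lies in $W^{u}_{\mathrm{loc}}(\sigma)$, symmetrically for $t\ge0$. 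Since $\sigma$ is attached to $\Lambda$ and $\Lambda$ is closed and invariant, a compactness/limit argument applied to orbit segments of regular points of $\Lambda$ tending to $\sigma$ yields regular points of $\Lambda$ on $W^{s}(\sigma)$ and on $W^{u}(\sigma)$; sliding along the flow, $\Lambda$ meets $W^{u}_{\mathrm{loc}}(\sigma)$ and $W^{s}_{\mathrm{loc}}(\sigma)$ arbitrarily near $\sigma$.

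First I would produce one homoclinic loop. Fix $\eps>0$ small and let $\de=\de(\eps)$ be a small shadowing constant. Using the $\Lambda$-orbits through $W^{u}_{\mathrm{loc}}(\sigma)$ and $W^{s}_{\mathrm{loc}}(\sigma)$ near $\sigma$ together with the recurrence of $\Lambda$, I build a bi-infinite $\de$-$1$-pseudo orbit in $\Lambda$ that is constantly $\sigma$ for indices $\le0$ ($\sigma\in\Lambda$, so this is admissible), then leaves $\sigma$ along a $\Lambda$-orbit in $W^{u}_{\mathrm{loc}}(\sigma)$ and crosses $\partial V$, then returns through $\Lambda$ (this leg being a finite $\de$-$1$-pseudo orbit of $\Lambda$, where the chain-recurrence hypothesis enters: one must join the point where the unstable manifold exits $V$ to the point where the stable manifold enters $V$ by an honest $\Lambda$-pseudo orbit), re-enters $V$ along a $\Lambda$-orbit in $W^{s}_{\mathrm{loc}}(\sigma)$, and is constantly $\sigma$ for all large indices. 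Shadowing gives $p_0\in\Lambda$ and $h_0\in Rep$; since $h_0(0)=0$ and $h_0$ is an increasing homeomorphism of $\R$, the shadowing estimate on the two $\sigma$-tails plus the trapping input force $p_0\in W^{s}(\sigma)\cap W^{u}(\sigma)$, and since the pseudo orbit contains a point of $\partial V$, which is at distance $>\eps$ from $\sigma$, the shadow orbit is not $\{\sigma\}$; thus $p_0$ is regular and $O(p_0)\in HL(\sigma)$. Being a full orbit distinct from $\{\sigma\}$, $O(p_0)$ is not contained in $\ov V$, so I fix $t_0$ with $\rho_0:=d(X_{t_0}(p_0),\ov V)>0$.

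Then I would bootstrap, now using only arcs of the fixed orbit $O(p_0)$. Choose $\eps<\rho_0$ (still small enough for trapping), $\de=\de(\eps)$, and $\tau,\tau'>|t_0|+1$ with $w^-:=X_{-\tau}(p_0)$, $w^+:=X_{\tau'}(p_0)$ satisfying $d(w^-,\sigma)<\de/2$ and $d(w^+,\sigma)<\de/2$. The orbit arc $B=\{X_t(w^-):0\le t\le\tau+\tau'\}\subset\Lambda$ runs from $w^-$ to $w^+$ and passes through $X_{t_0}(p_0)\notin\ov V$; discretize it as a jump-free finite $\de$-$1$-pseudo orbit. For $m\ge1$ let $P_m$ be the bi-infinite $\de$-$1$-pseudo orbit in $\Lambda$ that is $\sigma$ for low indices, then $m$ consecutive copies of $B$ glued by the admissible jumps $w^+\mapsto w^-$ (admissible since $d(w^+,w^-)<\de$), then $\sigma$ for high indices (glued by $w^+\mapsto\sigma$). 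Shadowing gives $p_m\in\Lambda$, and exactly as above $O(p_m)\in HL(\sigma)$. In the $j$-th copy of $B$ the shadow orbit is $\eps$-close to $w^-\in V$ at some instant $a_j$ and, strictly later, $\eps$-close to $X_{t_0}(p_0)$ --- which lies at distance $\rho_0>\eps$ outside $\ov V$ --- at an instant $b_j$; hence it is in $V$ at $a_1<\dots<a_m$ and outside $\ov V$ at $b_1<\dots<b_m$, with $a_j<b_j<a_{j+1}$. On each of the $2m-1$ successive intervals $(a_1,b_1),(b_1,a_2),\dots,(a_m,b_m)$ the orbit changes side of $\partial V$, so it meets $\partial V$ at least $2m-1$ times, and since a homoclinic loop is non-periodic (its closure contains $\sigma$) these are $2m-1$ distinct points; thus $n(p_m,V,\sigma)\ge 2m-1$. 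Letting $m\to\infty$ contradicts the uniform bound $N$ on $n(\cdot,V,\sigma)$ over $HL(\sigma)$ supplied by the hypothesis.

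The hard part is the construction of the first homoclinic loop and the bootstrapping, and the difficulty sits in two spots. The trapping input is what makes the shadows genuine homoclinic points and not orbits that merely linger near $\sigma$; exploiting it relies on using $h_m(0)=0$ and the monotonicity of $h_m$ to convert the two $\sigma$-tails of $P_m$ into the statements ``$X_s(p_m)$ is $\eps$-close to $\sigma$ for all $s\le0$'' and ``for all $s\ge S$''. The counting succeeds only because $V$ is chosen small (so that $O(p_0)$ truly leaves $\ov V$) and $\eps<\rho_0$; and this is also the place where chain recurrence of $\Lambda$ is genuinely needed --- in realizing the ``returning'' leg of the first pseudo orbit as an actual $\Lambda$-excursion joining the exit of $W^{u}_{\mathrm{loc}}(\sigma)$ from $V$ back to the entrance of $W^{s}_{\mathrm{loc}}(\sigma)$ into $V$ --- so that, once one such loop exists, repeating the block $B$ forces the shadow back outside $\ov V$ each time rather than wiggling harmlessly near $\sigma$.
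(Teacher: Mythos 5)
Your overall mechanism is the one the paper uses: pad a pseudo-orbit with two infinite $\sigma$-tails so that the Hartman--Grobman trapping forces the shadow into $W^s(\sigma)\cap W^u(\sigma)$, insert many copies of an excursion block that demonstrably leaves $\ov V$ and returns $\delta$-close to $\sigma$, and count the forced crossings of $\partial V$ to contradict the uniform bound $N$. Your Step 2 (the bootstrap with $m$ copies of the arc $B$, the alternation argument giving $\ge 2m-1$ crossings, and the non-periodicity of a homoclinic orbit to make the crossing points distinct) is sound and is essentially the paper's proof with the block chosen differently.

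The gap is in Step 1, precisely at the ``returning leg.'' Chain-recurrence, as defined here, only provides, for each $x\in\Lambda$, a pseudo-orbit from $x$ back to $x$; it does not provide a pseudo-orbit joining two \emph{distinct} points, such as the exit point of $W^u_{loc}(\sigma)$ from $V$ and the entry point of $W^s_{loc}(\sigma)$ into $V$. One does get chains $\sigma\rightsquigarrow q^u$ and $q^s\rightsquigarrow\sigma$ for free (flow along the invariant manifolds and make one small jump), but the chain you need goes the other way, $q^u\rightsquigarrow q^s$, and nothing in the hypothesis rules out that these points sit in different chain classes. The paper sidesteps this entirely: it first shows that chain-recurrence plus shadowing makes $\Lambda$ non-wandering (shadow the periodic concatenation of a chain from $x$ to $x$), and then takes as the excursion block a single honest orbit segment $X_{[0,t_0]}(x_0)$ with $x_0$ and $X_{t_0}(x_0)$ both in $B_\delta(\sigma)$ and $t_0$ larger than the exit time of $x_0$ from $V$ (possible by taking a neighborhood $U$ with $\ov U$ disjoint from $W^s_{loc}(\sigma)$, so exit times are bounded on $U$, and invoking non-wandering with $T$ beyond that bound). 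With that block your two-stage construction collapses into one: the pseudo-orbit $\ldots,\sigma,\sigma,(x_0,t_0),\ldots,(x_0,t_0),\sigma,\sigma,\ldots$ with $N+1$ middle entries already yields a homoclinic loop with at least $2N$ crossings of $\partial V$. So your argument is repairable, but as written the first homoclinic loop is not yet constructed from the stated hypotheses.
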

The idea behind the proof of this result the following: If  a chain-recurrent set contains an attached hyperbolic and has the shadowing property, then by only using the recurrence nearby $\sigma$, we can construct as many homoclinic loops as we want. This allow us to find a sequence of  points $x_k$ such that $n(x_k,V,\sigma)$ grows indefinitely, as $k$ goes to infinity, contradicting the hypothesis. We remark that almost all techniques used to obtain the previous theorem are of topological nature, the only exemption is that we explore the qualitative behavior of $X$ nearby $\sigma$. This behavior is crucial to our proof and it is ruled by the Hartman-Grobman Theorem, which requires the hyperbolicity of $\sigma$, but we totally drop any global tangent structure on $\Lambda$.   

 Despite the abstract character of Theorem \ref{fewhomocliniloops}, we can actually apply it to prove the non-shadowableness of concrete examples. To see this, let us recall that a hyperbolic singularity $\sigma$ is said to be an index-one singularity if either $\dim(E^s)=1$ or $\dim(E^u)=1$. Now we have the following result.

\begin{mainthm}\label{noshadowing}
Let $X$ be $C^1$-vector field on $M$. Suppose $\Lambda\subset M$ is a chain-recurrent set with an attached index-one hyperbolic singularity. Then $X$ has not the shadowing property on $\Lambda$.
\end{mainthm}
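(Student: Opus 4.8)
The plan is to deduce Theorem~\ref{noshadowing} directly from Theorem~\ref{fewhomocliniloops}. Since Theorem~\ref{fewhomocliniloops} already supplies all the dynamical content, the only point to check is that an attached \emph{index-one} hyperbolic singularity automatically satisfies its boundedness hypothesis on the integers $n(p,V,\sigma)$, and this will come for free from the one-dimensionality of one of the invariant manifolds of $\sigma$.

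So I would fix an attached index-one hyperbolic singularity $\sigma\in\Lambda$ and let $W$ denote whichever of $W^s(\sigma)$, $W^u(\sigma)$ corresponds to the one-dimensional summand of $T_{\sigma}M=E^s\oplus E^u$. By the stable (or unstable) manifold theorem, the associated local invariant manifold of $\sigma$ is an embedded one-dimensional disk, so deleting $\sigma$ from it yields exactly two arcs, each contained in a single orbit. Saturating by the flow, $W\setminus\{\sigma\}$ is a union of \emph{at most two} orbits. Every homoclinic loop $O(p)\in HL(\sigma)$ consists of regular points and satisfies $O(p)\subset W^s(\sigma)\cap W^u(\sigma)\subset W\setminus\{\sigma\}$; hence $HL(\sigma)$ contains at most two orbits.

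Now let $V$ be an arbitrary adapted neighborhood of $\sigma$. By the very definition of adaptedness, $n(p,V,\sigma)=\#\big(\partial V\cap O(p)\big)<\infty$ for every $p\in HL(\sigma)$, and since this number depends only on the orbit $O(p)$, by the previous paragraph it takes at most two (finite) values as $p$ ranges over $HL(\sigma)$. Setting $N:=1+\max\{\,n(p,V,\sigma):O(p)\in HL(\sigma)\,\}$, with the convention that the maximum is $0$ when $HL(\sigma)=\emptyset$, we get $N>0$ and $n(p,V,\sigma)\le N$ for all $p\in HL(\sigma)$. As $V$ was arbitrary, the hypothesis of Theorem~\ref{fewhomocliniloops} is met for the chain-recurrent set $\Lambda$ and its attached hyperbolic singularity $\sigma$, and that theorem yields that $X$ has not the shadowing property on $\Lambda$.

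I do not foresee a genuine obstacle in this reduction: the whole argument hinges on the elementary observation that a one-dimensional local invariant manifold contributes at most two orbits, hence at most two homoclinic loops, so that the boundedness hypothesis of Theorem~\ref{fewhomocliniloops} is automatic. The only mild care needed is to invoke ``adapted'' correctly, namely to note that finiteness of each $n(p,V,\sigma)$ is precisely what that notion was set up to guarantee.
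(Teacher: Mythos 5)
Your reduction is correct and is essentially the paper's own argument: the authors likewise observe that one-dimensionality of $W^u(\sigma)$ forces $W^u(\sigma)\setminus\{\sigma\}$ to consist of at most two orbits, hence $HL(\sigma)$ contains at most two homoclinic loops, and then they invoke Theorem~\ref{fewhomocliniloops}. Your version is in fact slightly more careful, since you spell out why finitely many loops plus adaptedness of $V$ yield the uniform bound $N$ (including the vacuous case $HL(\sigma)=\emptyset$), a step the paper leaves implicit.
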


As a consequence of the previous result, we can totally solve the problem of the shadowableness of chain-recurrent sets with attached hyperbolic singularities for flows defined in low-dimensional manifolds. Precisely, we obtain the following corollary.

\begin{mainclly}\label{dim3}
Let $X$ be $C^1$-vector field on a closed Riemannian manifold $M$ such that $\dim(M)\leq 3$. If $\Lambda\subset M$ is a chain-recurrent set containing an attached hyperbolic singularity, then $X$   has not the shadowing property on $\Lambda$. 
\end{mainclly}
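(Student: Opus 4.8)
The plan is to reduce the corollary to Theorems~\ref{fewhomocliniloops} and~\ref{noshadowing} by a dimension count on the hyperbolic splitting at the singularity. Let $\sigma\in\Lambda$ be the given attached hyperbolic singularity and write $T_\sigma M=E^s\oplus E^u$ as in the definition of hyperbolicity. Since $M$ is closed with $\dim M\le 3$, we have $\dim E^s+\dim E^u=\dim M\le 3$. I would split into two cases: either one of $E^s,E^u$ is trivial (so $\sigma$ is a sink or a source), or both are non-trivial.

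Suppose first $\dim E^u=0$ (the case $\dim E^s=0$ being symmetric). By the Hartman--Grobman theorem the flow near $\sigma$ is topologically conjugate to a linear contraction, so $W^u(\sigma)=\{\sigma\}$; hence $W^s(\sigma)\cap W^u(\sigma)=\{\sigma\}$ contains no regular point and therefore $HL(\sigma)=\emptyset$. Consequently the numerical hypothesis ``$n(p,V,\sigma)\le N$ for every $p\in HL(\sigma)$'' of Theorem~\ref{fewhomocliniloops} is vacuously satisfied for every adapted neighborhood $V$ of $\sigma$, and since $\sigma$ is attached to the chain-recurrent set $\Lambda$ by hypothesis, Theorem~\ref{fewhomocliniloops} applies and gives that $\Lambda$ has not the shadowing property. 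Suppose now $\dim E^s\ge 1$ and $\dim E^u\ge 1$. As their sum is at most $3$, they cannot both be $\ge 2$, so $\dim E^s=1$ or $\dim E^u=1$; that is, $\sigma$ is an index-one hyperbolic singularity attached to the chain-recurrent set $\Lambda$, and Theorem~\ref{noshadowing} applies verbatim. These two cases are exhaustive when $\dim M\le 3$: for $\dim M=1$ one is always in the first case, and for $\dim M=2$ one is either in the first case or in the subcase $\dim E^s=\dim E^u=1$ of the second. This proves the corollary.

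There is no genuinely hard step here; the only point that calls for a moment's care is the sink/source case, and the device that disposes of it is the remark that such a singularity carries no homoclinic loop at all, so that Theorem~\ref{fewhomocliniloops} applies for free. I would also note, although it is not strictly needed, that the first case is in fact vacuous: a hyperbolic sink (resp. source) is an attractor (resp. repeller), hence an isolated point of the chain-recurrent set of $X$, and so cannot be attached to a chain-recurrent set; thus under the hypotheses of the corollary $\sigma$ is automatically of index one, and only Theorem~\ref{noshadowing} is really used.
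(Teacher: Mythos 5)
Your proof is correct and follows essentially the same route as the paper: a dimension count on $E^s\oplus E^u$ reduces the saddle case to Theorem~\ref{noshadowing}. The only difference lies in how the sink/source case is excluded --- the paper dismisses it in one line by observing that $\Lambda$ is non-wandering (which tacitly presupposes the shadowing hypothesis, via Lemma~\ref{nonwand}), whereas you handle it either by applying Theorem~\ref{fewhomocliniloops} vacuously ($HL(\sigma)=\emptyset$) or by the Conley-type remark that an attractor/repeller cannot be attached to a chain-recurrent set; your treatment of this degenerate case is, if anything, the more careful one.
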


\begin{rmk} Although all the previous results are stated for subsets of $M$, we remark that they still hold globally. Precisely,  the same results hold if one replace $\Lambda$ by the whole manifold $M$. We just notice that  in the global case, the assumption that $\sigma$ is attached is superfluous
\end{rmk}

The next example shows that the chain-recurrence assumption on the previous results \ref{dim3} is crucial. 

\begin{example}
Suppose $M=S^2$ and let $X$ be a vector field generating the Morse-Smale flow on $S^2$ with exactly two singularities hyperbolic singularities $p$ and $q$ such that:

\begin{enumerate}
    \item The singularity $p$ is a hyperbolic sink and the singularity  $q$ is a hyperbolic source.
    \item  any regular point is attracted by $p$ in the future and attracted by $q$ in the past.  
\end{enumerate}

Consider  $\Lambda=M$. Notice that $\Lambda$ contains attached hyperbolic singularities but it is not a chain-recurrent set. In addition, $X$ has the shadowing property in $\Lambda$. Indeed, since $X$ is Morse-Smale, then by the results in \cite{Pe} $X$ is structurally-stable. In particular,  $X$ has the shadowing property on $M$.(see \cite{Py2}).   
\end{example}

In addition to the  previous results, we also investigate the shadowableness of flows with attached hyperbolic singularities which are defined on non-compact manifolds. As we will see trough Section \ref{sectionnoncompact}; the techniques developed to prove Theorems \ref{fewhomocliniloops}, \ref{noshadowing} Corollary \ref{dim3}; also hold when the phase space $M$ is not compact. So, all the previous results still hold in the non-compact scenario. However, when we move from the compact  to the non-compact setting, we need to be careful with the choice of shadowing property. Indeed, as showed in \cite{JNY}, the usual shadowing property defined to deal with flows on compact manifolds is not an invariant for equivalent flows, when the ambient manifold is not compact. In other words, it is not a dynamical property. This motivated the authors in \cite{JNY} to introduce a new version of shadowing property, the so called rescaled-shadowing property. The rescaled-shadowing emerged as an alternative to the usual shadowing property and it is, in fact, a dynamical property in the non-compact scenario.

Let $M$ be a Riemanninan manifold (compact or not) and let $X$ be a $C^1$-vector field. Since $M$ is not necessarily compact anymore, we cannot assure the existence   of a global flow $X_t$  induced by $X$ on  $M$. Thus, for the general case we will always assume that $X$ is a complete vector field over $M$. We postpone the precise definition of rescaled-shadowing and $R$-chain-recurrent set until Section \ref{sectionnoncompact}. Now we concentrate ourselves in to state the main result that we obtained regarding this new shadowing property.

\begin{mainthm}\label{norshadowing}
Let $X$ be a complete $C^1$-vector field on a Riemannian manifold $M$. Let $\Lambda$ be a $R$-chain-recurrent set.

\begin{enumerate}
    \item  If $\Lambda$ contains  a hyperbolic attached singularity $\sigma$ and  for every adapted neighborhood $V$ of $\sigma$ there is $N>0$ such that $n(p,V,\sigma)\leq N$ for every $p\in HL(\sigma)$, then $X$ has not the rescaled-shadowing property on $\Lambda$.

    \item If $\Lambda$ contains an attached codimension-one hyperbolic singularity, then $X$  has not the rescaled-shadowing property on $\Lambda$.
    \item If $\dim(M)\leq 3$ and $\Lambda$ contains an attached hyperbolic singularity, then $X$  has not the rescaled-shadowing property on $\Lambda$.
\end{enumerate}

\end{mainthm}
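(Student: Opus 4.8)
The plan is to transpose the proofs of Theorems \ref{fewhomocliniloops} and \ref{noshadowing} and of Corollary \ref{dim3} to the non-compact, rescaled setting, exploiting that every obstruction used there is \emph{local} around the hyperbolic singularity $\sigma$, where compactness is automatic. The preliminary step I would carry out is a localization of the rescaled-shadowing property: by Hartman--Grobman, $\sigma$ is dynamically isolated, so one fixes a small compact adapted neighborhood $\overline{V}$ of $\sigma$; the rescaling gauge $\rho$ of \cite{JNY}, being continuous and positive, is bounded above and below by positive constants on $\overline{V}$, so inside $\overline{V}$ a rescaled $\delta$-$1$-pseudo-orbit is an ordinary $C\delta$-$1$-pseudo-orbit and a rescaled $\varepsilon$-shadowing orbit is an ordinary $C'\varepsilon$-shadowing orbit, and conversely. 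Thus, restricted to $\overline{V}$, the rescaled-shadowing property produces exactly the qualitative picture used in the compact proofs (pseudo-orbits with tiny jumps recurring near $\sigma$ are traced by genuine orbits recurring near $\sigma$), and $R$-chain-recurrence of $\Lambda$ reduces, for the points of $\Lambda$ inside $\overline{V}$, to the ordinary recurrence exploited there.

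For part (1), I would argue by contradiction: assuming $X$ has the rescaled-shadowing property on $\Lambda$, fix an adapted $V$ and its bound $N$. Since $\sigma\in\Lambda$ is attached and $\Lambda$ is $R$-chain-recurrent, there are regular points of $\Lambda$ arbitrarily close to $\sigma$ lying on closed rescaled-pseudo-orbits; feeding these into the rescaled-shadowing property and invoking the hyperbolic normal form inside $\overline{V}$ exactly as in the proof of Theorem \ref{fewhomocliniloops}, one manufactures, for each $k$, a homoclinic loop $O(p_k)\in HL(\sigma)$ that is forced to cross $\partial V$ at least $k$ times, so that $n(p_k,V,\sigma)\to\infty$, contradicting $N$.

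Parts (2) and (3) I would then deduce from (1) exactly as Theorem \ref{noshadowing} and Corollary \ref{dim3} are deduced from Theorem \ref{fewhomocliniloops}. If $\sigma$ is codimension-one, say $\dim E^s=1$, the local stable set of $\sigma$ consists of just two regular orbit-branches, and any orbit asymptotic to $\sigma$ in forward time enters $\overline{V}$ along one of them; with the hyperbolic normal form this bounds $n(p,V,\sigma)$ by a constant depending only on $V$, so the hypothesis of (1) holds automatically (the case $\dim E^u=1$ is symmetric under time reversal), which gives (2). For (3), if $\dim(M)\leq 3$ and $\sigma$ is a hyperbolic singularity attached to the $R$-chain-recurrent $\Lambda$, then $\sigma$ is neither a sink nor a source, since a compact forward- (resp. backward-) invariant neighborhood of a sink (resp. source) carries no $R$-chain-recurrent regular point by the localization above; hence $\{\dim E^s,\dim E^u\}=\{1,2\}$ and $\sigma$ is codimension-one, so (2) applies.

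The hard part will be part (1): one must check that the homoclinic-loop multiplication from the compact proof is genuinely insensitive to the rescaling. Concretely, the pseudo-orbits constructed near $\sigma$ must have jumps small in the \emph{rescaled} gauge, not merely in the original metric, so that the rescaled-shadowing hypothesis actually applies, and the resulting $\varepsilon$-shadowing orbit must stay in $\overline{V}$ long enough for the crossing count to remain valid; both are delivered by the two-sided comparison above, but they require care because a priori the pseudo-orbit and its shadow may exit $\overline{V}$, where $\rho$ is no longer controlled. A secondary technical point is to confirm that $R$-chain-recurrence, in the sense of Section \ref{sectionnoncompact}, indeed furnishes closed rescaled-pseudo-orbits through points arbitrarily close to $\sigma$; this is precisely where the definition given there is used.
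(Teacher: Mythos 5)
Your reductions of (2) and (3) to (1) follow the paper's route and are fine, but the foundation of your argument for (1) --- the ``localization'' step --- contains a genuine error. You claim that the rescaling gauge, being continuous and positive, is bounded above and below by positive constants on a compact adapted neighborhood $\overline{V}$ of $\sigma$, so that rescaled pseudo-orbits and shadows in $\overline{V}$ coincide with ordinary ones up to constants. This is false precisely at the point that matters: by definition every gauge $e\in C^0_X(M)$ satisfies $e^{-1}(0)=\sing(X)$, so $e(\sigma)=0$ and $e\to 0$ as one approaches $\sigma$. There is no two-sided comparison on $\overline{V}$. In particular the pseudo-orbits used in the compact proofs of Theorems \ref{fewhomocliniloops} and \ref{noshadowing} --- which sit at $\sigma$ for infinitely many steps and then jump from $\sigma$ to a nearby regular point $x_0$ --- are not admissible $d$-pseudo-orbits in the rescaled sense, since no jump of positive size is allowed from a singularity (indeed, a rescaled pseudo-orbit containing a singularity consists entirely of singularities). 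This is exactly the obstruction the paper identifies, and you flag the right worry at the end of your proposal, but you dismiss it by appealing to the comparison that fails.

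What is missing is the replacement construction. The paper substitutes the stationary blocks at $\sigma$ by genuine orbit segments inside $W^u_{loc}(\sigma)$ (for $i\leq 0$) and $W^s_{loc}(\sigma)$ (for $i\geq 2$), and arranges every jump of the pseudo-orbit to occur on $\partial V$, where $d_0=\inf\{d(x):x\in\partial V\}>0$ because $\partial V$ is compact and free of singularities. To connect these two blocks one needs a recurrent regular point $q_{n_0}$ near $\sigma$ whose forward orbit passes $d_0$-close to $W^u_{loc}(\sigma)$ and $W^s_{loc}(\sigma)$ at $\partial V$; producing it requires the singular cross-section $D_r(\sigma)=\exp_\sigma(\{|v^s|=|v^u|\}\cap B_r(0))$, the annuli $A_n$ and the band partition $\{V_n\}$ adapted from \cite{SYY} (Lemma \ref{supoints}). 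Only after this new pseudo-orbit is in place can one run the loop-multiplication of Theorem \ref{fewhomocliniloops} (repeating the middle block $N+1$ times) to contradict the bound on $n(p,V,\sigma)$. Without this construction your part (1), and hence (2) and (3), does not go through.
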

The main ideas used for proving the previous theorem are similar to the ideas used to obtain Theorems \ref{fewhomocliniloops}, \ref{noshadowing} and Corollary \ref{dim3}; but the proof of Theorem \ref{norshadowing} is not just a straightforward adaptation of the arguments for the compact case. Indeed, the proof here is more delicate. The crucial distinction between shadowing property and rescaled-shadowing consists on the nature of the pseudo-orbits. Indeed, in the compact case we will construct pseudo-orbits containing both singularities and regular points at the same time. On the other hand, when we are trying construct pseudo-orbits for the rescaled-shadowing, we are not allowed to jump from a regular point to a singularity and vice-versa.  To overcome this issue, in the non-compact case we performed new pseudo-orbit constructions, by using some ideas derived from  the techniques developed in \cite{SYY}.         

The remaining of this paper is organized as follows: In section \ref{prel} state and discuss some basic concepts and some known results that we will use to obtain our results. In Section \ref{compact case} we provide the proofs for the Theorems \ref{fewhomocliniloops} \ref{noshadowing} and Corollary \ref{dim3}. In Section \ref{sectionnoncompact} we discuss the differences between the compact and non-compact cases as well as we provide a proof for Theorem \ref{norshadowing}.

\section{Preliminaries}\label{prel}

This section is devoted to state some concepts and previously known results that will be used in this text. 
\vspace{0.1in}

\textit{Basic Setting.}
\vspace{0.1in}

We start by fixing the setting that we will mainly work on. Throughout this work $M$ denotes a closed Riemannian manifold (unless otherwise stated). We denote $X$ for a $C^1$-vector field on $M$. We denote $\mathcal{X}^1(M)$ for the set of $C^1$-vector fields over $M$ and $\mathcal{H}^+(\mathbb{R})$ for the set of increasing homeomorphisms from $\R$ to $\R$.

\begin{definition}
A  $C^1$-flow on a compact metric space $M$ is a  $C^1$-map $\phi: \R \times M\to M$ such that $\phi(0,x)=x$ and $\phi(t+s,x)=\phi(s,(\phi(t,x))$, for every $t,s\in \R$ and $x\in M$.
\end{definition}

We denote by $\phi_t=\phi(t,\cdot)$ the time $t$ map of $\phi$. Equivalently, a $C^1$-flow can be seen as a one-parameter family of $C^1$-diffeomorphims $\{\phi_t\}_{t\in\R}$ such that $\phi_0=Id_M$ and $\phi_t\circ\phi_s=\phi_{t+s}$, for every $s,t\in \R$. By abuse of notation we will often denote the family $\{\phi_t\}_{t\in \R}$ by $\phi_t$. Recall that every $C^1$-vector field $X$ over $M$ defines a $C^1$-flow on $M$ which here will be denoted by $X_t$. A subset $\Lambda$ of $M$ is said to be \textit{invariant} if $X_t(\Lambda)=\Lambda$, for every $t\in \R$. We say that $x\in M$ is  a \textit{non-wandering point} if for any neighborhood $U$ of $x$ and $T>0$, there are $y\in U$ and $t>T$ such that  $X_t(y)\in U$. We denote by $\Omega(X)$ the set of non-wandering points of $X$. \textit{$X$ is non-wandering}  if $M=\Omega(X)$. A closed invariant set $\Lambda$ is said to be a \textit{non-wandering set for $X$} if the restricted flow $X_t|_{\Lambda}$ is non-wandering. 
 
 Let $Sing(X)$ denote the set of singularities of $X$. Note that if $\sigma$ is a singularity if, and only if $X_t(\sigma)=\sigma$, for every $t\in \mathbb{R}$. If $\sigma$ is hyperbolic, we call $s=\dim(E^s)$ and $u=\dim(E^s)$ the stable index and unstable index of $\sigma$, respectively. We say that $\sigma$ is an \textit{index-one hyperbolic singularity} if $s=1$ or $u=1$. Hereafter, we will assume that all the index-one hyperbolic singularities that we will  consider here have unstable index equal to one. We would like to stress that there is no loss of  generality in such assumption. Indeed, all the constructions we will perform here can be reproduced when $s=1$.  
 We denote $|\cdot|$ and $d$ for the norm on $TM$ and the metric induced on $M$ by the Riemmanian metric of $M$, respectively.  We denote by $B_r(x)$ the open ball in $M$ centered in $x$ with radius $r>0$, for every $x\in M$ which is not a hyperbolic singularity. When $\sigma\in M$ is a hyperbolic singularity, $B_r(\sigma)$ will denote a different type of neighborhood of $\sigma$. To make it precise, note that if $\sigma$ is hyperbolic, then every $v\in T_{\sigma}M$ can be write as $v=v^s+v^u$, where $v^s\in E^s$ and $v^u\in E^u$. Now we denote 
 $$\mathcal{B}_{r}(\sigma)=\{v\in T_{\sigma} M; |v^s|, |v^u|\leq r  \}.$$
Let $\be_0>0$ be such that the exponential map $\exp_x:T_xM\to M$ is injective  in $B_{\be_0}(0)$, for every $x\in M$. The previous map is actually an isometry, if one endows $M$ and $T_xM$ with the $d$ and $|\cdot|$, respectively.  For every $0<r\leq\be_0$,  denote $B_r(\sigma)=\exp_{\sigma}(\mathcal{B}_{r}(\sigma))$. The neighborhood $B_{r}(\sigma)$ is a box neighborhood of $\sigma$.  Since box neighborhoods form a basis of neighborhoods for $\sigma$, then there is no difference between use box neighborhoods or use balls to deal with local problems around $\sigma$. In addition,  box neighborhoods are useful to visualize some aspects of the local dynamics of a flow near a hyperbolic singularity.  

 By the stable manifold theorem for hyperbolic singularities, for any hyperbolic singularity $\sigma$ the sets $W^s(\sigma)$ and $W^u(\sigma)$  are immersed submanifolds of $M$ tangent to $E^s_{\sigma}$ and $E^u_{\sigma}$, respectively. In addition, one has $\dim(E^s_{\sigma})=\dim W^s(\sigma)$ and  $\dim(E^u_{\sigma})=\dim W^u(\sigma)$. 

 Recall the concept of $\epsilon$-local stable and unstable sets:

 $$W^s_{\epsilon}(\sigma)=\{x\in M; d(X_t(x),\sigma)\leq \eps, t\geq 0\}$$
 \begin{center}
     and
 \end{center}
 $$W^u_{\eps}(\sigma)=\{x\in M; d(X_t(x),\sigma)\leq \eps, t\leq0\}.$$
By the stable manifold theorem, for $\eps>0$ small enough,  $W^s_{\eps}(\sigma)$ and $W^u_{\eps}(\sigma)$ are immersed submanifolds tangent to $E^s_{\sigma}$ and $E^u_{\sigma}$, respectively. In addition, one has 

$$W^s(\sigma)=\bigcup_{t\geq0} X_{-t}(W^s_{\epsilon}(\sigma))
\textrm{ and } W^s(\sigma)=\bigcup_{t\geq0} X_{t}(W^u_{\epsilon}(\sigma)).$$ 

 A regular point $x$ is said to be a homoclinic point for a hyperbolic singularity $\sigma$ if $x\in W^s(\sigma)\cap W^u(\sigma)$. Let us denote by $HL(\sigma)$ the set of homoclinic points for $\sigma$. We will often refer to the orbit of a point in $HL(\sigma)$ as a homoclinic loop for $\sigma$. A crucial tool used to obtain our results is  the celebrated Grobman-Hartman theorem that we  state in the following. 
 
 \begin{theorem}[Hartman-Grobman Theorem \cite{MP}]\label{HG}
 Let $X$ be a $C^1$ vector field over $M$ and suppose $\sigma$ is a hyperbolic singularity of $X$. Then there are a neighborhood $V$ of $\sigma$, a neighborhood $U$ of $0\in T_{\sigma}M$ and a homeomorphim $h:V\to U$ such that $h(X_t(x))=DX_{t}(h(x))$, for every $x\in V$ and $t\in \R$ such that $X_t(x)\in V$. Here $DX_t$ denotes the local flow generated by the derivative of $X$ in $\sigma$. 
 \end{theorem}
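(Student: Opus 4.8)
\textbf{Proof proposal for Theorem \ref{HG} (Hartman--Grobman).}

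The plan is to reduce the problem to a linear-model statement and then to prove the linear statement by a fixed-point / telescoping argument in the style of the classical Hartman--Grobman proof for diffeomorphisms, adapted to the time-one map of the flow. First I would work entirely in the chart $\exp_\sigma$, so that without loss of generality $M$ is replaced by an open neighborhood of $0$ in $T_\sigma M=E^s\oplus E^u$, the singularity is at the origin, and $X$ is a $C^1$ vector field vanishing at $0$ with $DX(0)$ hyperbolic. Writing $X=Ax+R(x)$ with $A=DX(0)$ and $R(x)=o(|x|)$, $R\in C^1$, $DR(0)=0$, one multiplies $R$ by a smooth bump function supported in a small ball $B_r(0)$ to obtain a globally defined vector field $\widetilde X=Ax+\widetilde R(x)$ that agrees with $X$ near $0$ and has $\widetilde R$ globally Lipschitz with arbitrarily small Lipschitz constant $\eta$ (chosen smaller than the hyperbolicity gap of $A$); its flow $\widetilde X_t$ is then complete. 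Since any conjugacy between $\widetilde X_t$ and $DX_t$ restricts, near $0$, to the required local conjugacy between $X_t$ and $DX_t$, it suffices to construct a global topological conjugacy between the two complete flows $\widetilde X_t$ and the linear flow $L_t=e^{tA}$.

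Next I would pass from flows to their time-one maps. Let $f=\widetilde X_1$ and $L=e^{A}$; here $L$ is a hyperbolic linear isomorphism and $f=L+\varphi$ where $\varphi$ is globally Lipschitz with small Lipschitz constant (this uses Gronwall to control how $\widetilde R$ propagates over the time-one interval). By the Hartman--Grobman theorem for maps one produces a homeomorphism $h_0$ of $T_\sigma M$ with $h_0\circ f=L\circ h_0$: concretely, $h_0=\mathrm{id}+u$ where $u$ is the unique bounded continuous solution of the functional equation $u\circ f - L\circ u = -\varphi$, obtained as a fixed point of the contraction $u\mapsto L^s(u\circ f^{-1})\oplus L^u(\,\cdot\,)^{-1}(\text{shifted }u)$ on the Banach space $C_b^0$, after splitting along $E^s\oplus E^u$ and inverting the expanding/contracting parts in the appropriate direction; injectivity and surjectivity of $h_0$ follow from the symmetric construction of a conjugacy the other way plus uniqueness of bounded solutions. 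Then I would upgrade this conjugacy of time-one maps to a conjugacy of the full flows by averaging along the flow: set
\[
h(x)=\int_0^1 L_{-s}\bigl(h_0(\widetilde X_s(x))\bigr)\,ds,
\]
and check, using $h_0\circ f=L\circ h_0$ and the group laws $\widetilde X_{s+1}=\widetilde X_1\circ \widetilde X_s$, $L_{s+1}=L\circ L_s$, that $h\circ \widetilde X_t=L_t\circ h$ for all $t\in\R$; one also verifies $h-\mathrm{id}$ is bounded and continuous, and that $h$ is a homeomorphism by running the same averaging on the inverse conjugacy. Restricting $h$ to the neighborhood $V\subset B_r(0)$ where $\widetilde X=X$ gives the statement, with $U=h(V)$.

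The main obstacle, and the step deserving the most care, is the contraction-mapping construction of $h_0$ for the map $f=L+\varphi$: one must choose the bump-function truncation so that the Lipschitz constant of $\varphi$ is strictly smaller than $\min\{\|L|_{E^s}\|^{-1},\ \|(L|_{E^u})^{-1}\|^{-1}\}^{-1}$-type gap constants coming from hyperbolicity, so that the operator whose fixed point is $u$ is genuinely a contraction on $C_b^0(E^s)\times C_b^0(E^u)$; and then proving that the resulting $h_0=\mathrm{id}+u$ is a \emph{homeomorphism} (not merely continuous) requires the standard but delicate argument: build $g_0=\mathrm{id}+v$ conjugating $L$ to $f$ the other way, observe $h_0\circ g_0$ and $g_0\circ h_0$ both conjugate $L$ (resp. $f$) to itself and differ from the identity by a bounded map, and invoke uniqueness of bounded solutions of $w\circ L=L\circ w$ to conclude $h_0\circ g_0=g_0\circ h_0=\mathrm{id}$. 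The flow-averaging step and the chart reduction are comparatively routine once this linear-algebra-flavored core is in place; I would also remark that since the statement only asserts a local conjugacy, no uniqueness or regularity beyond continuity of $h$ is needed, which is what makes the truncation trick harmless.
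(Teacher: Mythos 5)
The paper does not prove this statement: it is quoted as the classical Hartman--Grobman theorem with a citation to Melo--Palis, so there is no in-paper argument to compare against. Your sketch is the standard proof from exactly that source (chart reduction, bump-function truncation so the nonlinearity is globally Lipschitz with constant below the hyperbolicity gap, the contraction-mapping conjugacy for the time-one map $f=L+\varphi$, and the $\int_0^1 L_{-s}\circ h_0\circ \widetilde X_s\,ds$ averaging to pass from the time-one map to the flow), and it is correct; the only point worth flagging is that in the final restriction step the identity $h(X_t(x))=DX_t(h(x))$ is really obtained for $t$ such that the whole orbit segment $X_{[0,t]}(x)$ stays in the region where $X=\widetilde X$, which suffices for the local statement since near a hyperbolic singularity orbits leaving a box neighborhood do not return.
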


\begin{rmk}
    The Hartman-Grobman theorem still holds when the manifold $M$ is not compact. 
\end{rmk}
 
Any neighborhood of $\sigma$ where the conjugacy given by the Hartman-Grobman theorem holds will be called a Hartman-Grobman neighborhood of $\sigma$. For a given Hartman-Grobman neighborhood $V$, denote $W^s_{loc}(\sigma)$ and $W^u_{loc}(\sigma)$ by $h(E^s(\sigma)\cap U)$ and $h(E^u(\sigma)\cap U)$, where $h:V\to U$ is the conjugacy given by the Hartman-Grobman theorem. By the Hartman-Grobman theorem, we can assume that $W^s_{loc}(\sigma)=W^s_{\eps}(\sigma)$ and $W^s_{loc}(\sigma)=W^s_{\eps}(\sigma)$ for some $\eps>0$.  If $\sigma$ is a hyperbolic singularity, we say that $\sigma$ is a sink if $E^u=\{0\}$, a souce if $E^s=\{0\}$ and a saddle if both $E^s$ and $E^u$ are non-trivial. If $V$ is a Hartman-Grobman neighborhood of $\sigma$, then any regular point $x\in V$ must leave $V$ in positive or in negative time. So, for a regular point $x\in V$ we define the following numbers

  $$t^+_x=\inf\{t>0; X_t(x)\in \partial V\} \textrm{ and } t_x^-=\inf\{t>0; X_{-t}(x)\in \partial V\},$$
when they are defined. 

\begin{rmk}
    Another consequence of the Hartman-Grobman theorem is that for $r>0$ small enough, the box neighborhood $B_{r}(\sigma)$ is an adapted neighborhood of $\sigma$. 
\end{rmk}

\vspace{0.1in}

\textit{Shadowing vs Strong Shadowing.}
\vspace{0.1in}

There are several different versions of the shadowing property, among the best
known are those proposed by Komuro (see \cite{K2}), who proposes 5 different types of
shadowing taking into account whether the $\delta$-$T$-chain is finite or infinite and the
type of reparameterization used for $\eps$-shadowing the pseudo-orbits. 
Before to precise their definition, let us fix the following notation  $$ Rep_{\eps}=\left\{h\in Rep; \left|\frac{h(s)-h(t)}{s-t}-1\right|\leq \eps\right\}.$$  

We say that a $\de$-$T$-pseudo orbit $(x_i,t_i)$ is \textit{strongly $\eps$-shadowed} if there are  $y\in M$ and  $h\in Rep_{\eps}$ such that $$d(X_{h(t)}(y),X_{t}(x_i))\leq \eps,$$ for $t\in [s_i,s_{i+1})$. 

\begin{definition}[Strong Shadowing property]
We say that $X$ has the strong shadowing property on $\Lambda$  if for every $\eps>0$, there is some $\de>0$ such that every   $\de$-$1$-pseudo-orbit $(x_i,t_i)_a^b$ such that $x_i\in \Lambda$, for $a\leq i\leq b$ is strongly $\eps$-shadowed by some point in $\Lambda$. We say that $X$ has the global strong shadowing property if $X$ has the strong shadowing property on $M$.
  
\end{definition}

It is immediate to see that the strong shadowing property implies the shadowing property, but as it was shown in \cite{K2} by M. Komuro, the converse does not hold. Indeed, Komuro introduced the strong shadowing property in order to recover a classical feature of shadowing in the homeomorphism setting: The equivalence between shadowing and finite shadowing. To make this equivalence clear in the context of flows, we recall the concept of finite shadowing property.

\begin{definition}(Finite Shadowing Property)
    \begin{enumerate}
        
    \item We say that $X$ has the finite shadowing property on $\Lambda$ if for every $\eps>0$, there exists $\delta>0$ such that every  finite $\de$-$1$-pseudo-orbit $(x_i,t_i)_a^b$ such that $x_i\in \Lambda$, for $a\leq i\leq b$ is $\eps$-shadowed  by some point in $\Lambda$. We say that $X$ has the global finite shadowing property  if $X$ has the finite shadowing  shadowing property on $M$
    \item We say that $X$ has the strong finite shadowing property on $\Lambda$ if for every $\eps>0$, there exists $\delta>0$ such that every   finite $\de$-$1$-pseudo-orbit $(x_i,t_i)_a^b$ such that $x_i\in \Lambda$, for $a\leq i\leq b$ is strongly $\eps$-shadowed by some point in $\Lambda$. We say that $X$ has the global finite strong  shadowing property if $X$ has the finite shadowing  shadowing property on $M$. 

\end{enumerate}
\end{definition}
As it is showed in \cite{K2}, if $\Lambda$ has not attached singularities; then shadowing, strong shadowing, finite shadowing and finite strong shadowing are equivalent. On the other hand, if $\Lambda$ has attached singularities, then the only equivalence we can obtain is between  strong shadowing and strong finite shadowing. We will come back to this topic on Sections \ref{sectionnoncompact}.

\begin{rmk}
    An equivalent way to define the shadowing property is to require  that for every $T \neq 0$ and any $\epsilon>0$, there exists $\delta>0$ such that every $\delta$-$T$-pseudo-orbit is
$\epsilon$-shadowed (see \cite{K2}).
\end{rmk}

\section{Proof of Theorems A, B and Corollary C}\label{compact case}

This section is devoted to the proof of Theorem A. The proof we will present here is based on some lemmas. Before to start the proof, we first explain the main steps of the  proof. The main idea here is that the shadowing property together with some recurrence nearby a hyperbolic singularity $\sigma$ is enough to construct several distinct homoclinic loops. Once we can construct such loops, the next step is to obtain a contradiction by using the shadowing property to explode the numbers $n(p,V,\sigma)$. 

We now start our route to prove Theorem A. The first lemma we need  is a classical fact of flows theory. Here we will provide a proof for the reader's convenience. 
\begin{lemma}\label{nonwand}
Let $\Lambda$ be a chain-recurrent set for $X$ and suppose $\Lambda$ satisfies the shadowing property. Then $\Lambda$ is non-wandering.
\end{lemma}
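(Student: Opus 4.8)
The plan is to show that every point of $\Lambda$ is non-wandering for the restricted flow $X_t|_\Lambda$. Fix $x \in \Lambda$, a neighborhood $U$ of $x$ in $\Lambda$, and $T > 0$; I must produce $y \in U$ and $t > T$ with $X_t(y) \in U$. The chain-recurrence of $\Lambda$ gives, for every $\epsilon > 0$, a finite $\epsilon$-$1$-pseudo-orbit $(x_i,t_i)_0^N$ in $\Lambda$ with $x_0 = x_N = x$. The natural idea is to concatenate this finite loop with itself infinitely many times to obtain a periodic-type infinite $\epsilon$-$1$-pseudo-orbit through $x$, and then apply the shadowing property to get a genuine orbit that repeatedly returns close to $x$.

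More precisely, first I would choose $\epsilon > 0$ small enough that the $\epsilon$-ball around $x$ (inside $\Lambda$) is contained in $U$; then pick $\delta > 0$ from the shadowing property for this $\epsilon$. Using chain-recurrence with constant $\delta$, take a finite $\delta$-$1$-pseudo-orbit $(x_i,t_i)_0^N$ with $x_0 = x_N = x$, all $x_i \in \Lambda$. Define the bi-infinite pseudo-orbit obtained by repeating this block, i.e. for $j \in \Z$ set $(z_{jN+i}, \tau_{jN+i}) = (x_i, t_i)$ for $0 \le i < N$; the gluing condition at the seams holds because $d(X_{t_{N-1}}(x_{N-1}), x_N) = d(X_{t_{N-1}}(x_{N-1}), x) \le \delta$ and $x_N = x_0$. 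This is a $\delta$-$1$-pseudo-orbit in $\Lambda$, so it is $\epsilon$-shadowed by some $y \in \Lambda$ via a reparametrization $h \in Rep$. With the notation $(s_k)$ for the partial sums of the $\tau_k$, the shadowing estimate at the indices $k = jN$ (where $z_{jN} = x$) gives $d(X_{h(s_{jN})}(y), x) \le \epsilon$, hence $X_{h(s_{jN})}(y) \in U$ for all $j \ge 0$.

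It remains to convert this into a single return of a point of $U$ to $U$ after time $> T$. Since $t_i \ge 1$ for all $i$, the sums $s_{jN}$ are at least $jN$, so they tend to $+\infty$; and $h$ is an increasing homeomorphism of $\R$ fixing $0$, so $h(s_{jN}) \to +\infty$ as well. Pick $j_0 \ge 0$ with $h(s_{j_0 N}) \in U$ (take $j_0 = 0$, giving $h(s_0) = h(0) = 0$, so $y = X_{h(s_0)}(y) \in U$ provided we also arranged $y$ close to $x$; alternatively just work with the point $w := X_{h(s_0)}(y) \in U$) and then $j_1 > j_0$ with $h(s_{j_1 N}) - h(s_{j_0 N}) > T$. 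Setting $w = X_{h(s_{j_0 N})}(y) \in U$ and $t = h(s_{j_1 N}) - h(s_{j_0 N}) > T$, we get $X_t(w) = X_{h(s_{j_1 N})}(y) \in U$, which is exactly the non-wandering condition for $x$ in $\Lambda$. Since $x \in \Lambda$ was arbitrary, $X_t|_\Lambda$ is non-wandering.

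The only real subtlety I anticipate is bookkeeping with the reparametrization: one must be sure that $h(s_{jN}) \to \infty$ (which follows from $s_{jN} \to \infty$ and $h$ being an increasing homeomorphism of $\R$) and that the successive return times $h(s_{j_1 N}) - h(s_{j_0 N})$ can be made to exceed $T$ — again immediate from $h(s_{jN}) \to \infty$. A secondary point is making sure the shadowing orbit $y$ can be taken in $\Lambda$, which is built into the definition of the shadowing property on $\Lambda$ used in this paper. Everything else is routine concatenation of pseudo-orbits.
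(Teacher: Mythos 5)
Your proposal is correct and follows essentially the same route as the paper: concatenate the finite chain-recurrence loop through $x$ into a (bi-)infinite $\delta$-$1$-pseudo-orbit, shadow it by a point $y\in\Lambda$, and use that $h$ is an increasing homeomorphism of $\R$ with $h(0)=0$ to extract return times exceeding $T$. Your bookkeeping with the two indices $j_0<j_1$ is in fact slightly more careful than the paper's, which simply notes $d(z,x)\le\eps$ and picks $q$ with $h(qs_k)>T$.
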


\begin{proof}
Suppose that $\Lambda$ is chain-recurrent and satisfies the shadowing property. Let $x\in \Lambda$ and take $U$ be a neighborhood of $x$  in $\Lambda$ and $T>0$. Let $\eps>0$ be such that $B_{\eps}(x)\cap \Lambda\subset U$ .  Let $0 <\delta\leq \eps$ be given by the shadowing property of $X$ in  $\Lambda$. Let $(x'_i,s'_i)_{1}^k$ be a $\delta$-$1$-pseudo-orbit such that  $x'_0=x'_k=x$ and construct a new pseudo orbit $(x_i,t_i)_{i\in \mathbb{Z}}$ by setting $x_{i}=x'_r$ and    $t_{i}=t'_r$, if $i=qk+r$ with $r\in [0,k]$ and  $q\in \Z$. Then the shadowing property implies that there is some $z\in \Lambda$ and some $h\in Rep$ such that  $d(z,x),d(X_{h(qs_k)}(z),x)\leq\eps$, for every $q\in \Z$. Since $h$ is increasing, there is some $q$ such that $h(qs_k)>T$ and therefore $\Lambda$ is non-wandering. 
\end{proof}
By the previous Lemma, from now on we will assume that the sets under consideration are non-wandering set. In our next result, we obtain a criterion for the existence of homoclinic loops  for non-wandering sets containing attached hyperbolic singularities.

\begin{lemma}\label{HL}
Let $\Lambda$ be a non-wandering set for $X$ which satisfies the shadowing property. If $\Lambda$ contains an attached hyperbolic singularity, then $HL(\sigma)\neq\emptyset$
\end{lemma}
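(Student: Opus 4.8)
The plan is to exploit the shadowing property to manufacture a pseudo-orbit that leaves and returns to a small box neighborhood of $\sigma$, forcing its shadow to be a genuine homoclinic loop. Fix an adapted Hartman–Grobman box neighborhood $V=B_r(\sigma)$ of $\sigma$, with $r>0$ small. Since $\sigma$ is attached to $\Lambda$, there is a regular point $q\in\Lambda\cap V$ close to $\sigma$; by shrinking if necessary we may take $q\in W^u_\varepsilon(\sigma)\setminus\{\sigma\}$ (or at least a regular point of $\Lambda$ whose backward orbit stays near $\sigma$ until it exits $V$). Likewise, choosing a regular point of $\Lambda$ near $\sigma$ and running the local dynamics, we get a regular point whose forward orbit is drawn toward $\sigma$. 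The idea is that a tiny jump near $\sigma$ can bridge the local stable and unstable directions.

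Concretely, I would argue as follows. Let $\varepsilon>0$ be small (much smaller than $r$ and than the injectivity radius $\beta_0$), and let $\delta>0$ be the shadowing constant for $\varepsilon$ on $\Lambda$. Using that $\sigma$ is attached and non-wandering, pick a regular point $p\in\Lambda$ with $d(p,\sigma)<\delta$. Build a $\delta$-$1$-pseudo-orbit $(x_i,t_i)_{-\infty}^{\infty}$ that (i) for large negative indices sits on $\sigma$ (i.e. $x_i=\sigma$, $t_i=1$), (ii) at index $0$ jumps from $\sigma$ to $p$ — legitimate since $d(X_1(\sigma),p)=d(\sigma,p)<\delta$ — then follows the true orbit of $p$ through its passage near $\sigma$, and (iii) for large positive indices returns to sitting on $\sigma$, using that the forward orbit of $p$ (being a non-wandering point near $\sigma$) can be chained back into the $\delta$-ball of $\sigma$ and then onto $\sigma$ itself. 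Actually the cleanest route: use Lemma \ref{nonwand} to know $\Lambda$ is non-wandering, hence there is a point $p\in\Lambda\cap B_\delta(\sigma)$ regular and a finite $\delta$-$1$-pseudo-orbit from $p$ back to $B_\delta(\sigma)$; splice this with the constant-$\sigma$ segments to obtain a bi-infinite $\delta$-$1$-pseudo-orbit that is constantly $\sigma$ outside a finite window and passes through a regular point of $\Lambda$ inside the window. Shadowing yields $y\in\Lambda$ and $h\in Rep$ with $d(X_{h(t)}(y),\sigma)\le\varepsilon$ for all $t\le s_{i_0}$ and all $t\ge s_{i_1}$, while $X_{h(t_*)}(y)$ is $\varepsilon$-close to a regular point of $\Lambda$ bounded away from $\sigma$ for some intermediate $t_*$. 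Then $y\in W^s_\varepsilon(\sigma)\cap\big(\text{backward }\varepsilon\text{-neighborhood of }\sigma\big)$, which by the stable manifold theorem and the Hartman–Grobman picture forces $O(y)\in W^s(\sigma)\cap W^u(\sigma)$; since the orbit of $y$ meets a regular point, $y$ is regular, so $O(y)\in HL(\sigma)$ and $HL(\sigma)\neq\emptyset$.

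The step I expect to be the main obstacle is making precise that shadowing a pseudo-orbit which is \emph{eventually constant at $\sigma$ in both time directions} genuinely forces the shadow into $W^s(\sigma)\cap W^u(\sigma)$, rather than merely into a small neighborhood of $\sigma$. The subtlety is that $d(X_{h(t)}(y),\sigma)\le\varepsilon$ for all large $|t|$ does not by itself give convergence to $\sigma$; one must invoke the local dynamics near the hyperbolic singularity (via Theorem \ref{HG}): inside a Hartman–Grobman box, the only full orbit that stays in the box for all positive time is in $W^s_{loc}(\sigma)$, and the only one staying for all negative time is in $W^u_{loc}(\sigma)$. So one chooses $\varepsilon$ small enough that the $\varepsilon$-neighborhood of $\sigma$ sits inside such a box, and then uses the adaptedness of $V$ together with the fact that the shadow stays $\varepsilon$-close to $\sigma$ on an infinite forward (resp. backward) ray to conclude $y\in W^s(\sigma)$ (resp. $y\in W^u(\sigma)$). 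The remaining bookkeeping — verifying the jump estimates, handling the reparametrization $h$, and ensuring $y$ can be taken in $\Lambda$ and regular — is routine given the hypotheses and the earlier lemma.
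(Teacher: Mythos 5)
Your proposal is correct and follows essentially the same route as the paper: a bi-infinite pseudo-orbit that is constantly equal to $\sigma$ outside a finite window, with a regular excursion spliced in via the non-wandering property, whose shadow is forced by the Hartman--Grobman local picture to be a regular point of $W^s(\sigma)\cap W^u(\sigma)$. The one detail the paper makes explicit and you leave implicit is taking the regular point $x_0$ off $W^s_{loc}(\sigma)$ and letting it flow past its exit time $t^+_{x_0}$, which is what guarantees the excursion genuinely leaves $B_{2\eps}(\sigma)$ and hence that the shadowing point cannot be $\sigma$ itself.
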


\begin{proof}
Suppose $\Lambda$  is a non-wandering set for $X$  and suppose it  satisfies the shadowing property. For the remainder of this proof, all the neighborhoods we are considering here are neighborhoods in the relative topology of $\Lambda$.  Let $\si\in Sing(X)$ be a  an attached hyperbolic singularity of $X$. Since $\sigma$ is hyperbolic, by the Hartman-Grobman theorem there exists $r_0>0$ such that the flow generated by $X$ is conjugated to the flow generated by $DX_{\sigma}$ in  $V=B_{r_1}(\sigma)$. In particular,  the following facts hold:
\begin{itemize}
    \item $\sigma$ is the only one singularity in $V$. 
    \item  $\sigma$ cannot be a sink neither a source. 
\end{itemize}

The last fact holds since  $\Omega(X|_{\Lambda})=\Lambda$ and $\sigma$ is attached to $\Lambda$. Therefore, for every neighborhood $U$ of $\sigma$ there exists  a point $p\in U\setminus (W_{loc}^s(\sigma)\cup W_{loc}^u(\sigma))$.
Fix $\eps>0 $ such that $B_{2\eps}(\sigma)\subset V$ and let $0<\delta<\eps$ be given by the shadowing property.  Fix a regular point $p\in B_{\delta}(\sigma)\setminus W^s_{loc}(\sigma)$. Let  $U\subset B_{\delta}(\sigma)\setminus W^s_{loc}(\sigma)$ be a neighborhood of $p$. Since $p$ is a non-wandering point, there exists $x_0\in U$ and $t_0>t^+(x_0)$ such that $X_{t_0}(x_0)\in U$. By possible shrinking the neighborhood $V$,  we can suppose $t_0\geq 1$, since $\sigma$ is attached. 

 Note that we have $d(X_{t_0}(x_0),\sigma)\leq \delta$.
 Now we construct the following $\delta$-$1$-R-pseudo-orbit:
 
 $$(x_i,t_i)=\begin{cases} (\sigma,1), i< 0\\
(x_0,t_0), i=0\\
(\sigma,1), i\geq1 \end{cases}.$$

The shadowing property implies that there is some point $z\in \Lambda$ and a reparametrization $h$ such  that $$d(X_{h(t)}(z),X_{t}(x_i))\leq \eps$$ for every $t\in [t_i,t_{i+1}]$. Note that $z$ is a regular point. Indeed, since $z$  $\eps$-shadows  the orbit arc $X_{[0,t_0]}(x_0)$, then  $z$ needs to leave the neighborhood $B_{r_1}(\sigma)$.     
We claim that $x\in W^s(\sigma)\cap W^u(\sigma)$. Indeed,  since $$d(X_{h(t+1+t_0)}(z),\sigma)\leq\eps, \forall t\geq, 0$$ then $h_{1+t_0}(z)$ never leaves $V$ in positive time. Since $z$ is a regular point, then $z\in W^s(\sigma)$. Analogously we conclude that $z\in W^u(\sigma)$. Therefore the claim holds and the proof is complete. 

\end{proof}

Now we are able to prove Theorem \ref{fewhomocliniloops}.

\begin{proof}[Proof of Theorem \ref{fewhomocliniloops}.]
Suppose $\Lambda$ satisfies the shadowing property. Then $\Lambda$ is a non-wandering set by Lemma \ref{nonwand}. Let $\sigma$ be a hyperbolic singularity attached to $\Lambda$ and satisfying the hypothesis of the Theorem \ref{fewhomocliniloops}. Let $V$ be a Hartman-Grobman neighborhood of $\sigma$. Let $N$ be such that $n(p,V\sigma)\leq N$, for every $p\in HL(\sigma)$. Let $\eps>0$ be such that $B_{2\eps}(\sigma)\subset V$ and and Let $0<\delta\leq \eps$ be given by the shadowing property. Let $x\in B_{\delta}(\sigma)$ be a regular point of $\Lambda$. Let $U\subset B_{\delta}(\sigma)$ be a neighborhood of $x$ in $\Lambda$ which does not contain $\sigma$. Since $\Lambda$ is non-wandering, there is some point $x_0\in U$ and $t_0>t^+(p)$ such that $X_{t_0}(x_0)\in U$ . Let us construct the following $\delta$-$1$-pseudo orbit.

$$(x_i,t_i)=\begin{cases} (\sigma,1), i< 0\\
(x_0,t_0), i=1,....N+1\\
(\sigma,1), i>N+1\\
 \end{cases}.$$ 
 
 Let $z$ be a point $\eps$-shadowing for the previous pseudo-orbit. As in the proof of Lemma \ref{HL}, we have $z\in HL(\sigma)$. But the choice of $\eps$ and the construction of $(x_i,t_i)$ implies that $n(z,V,\sigma)\geq 2N$ and this is contradiction. Therefore, $\Lambda$ does not have the shadowing property.

\end{proof}

Now we are ready to prove Theorem \ref{noshadowing}.

\begin{proof}[Proof of Theorem \ref{noshadowing}]
Let $X_t$ be a non-wandering flow  with a hyperbolic singularity $\sigma$. Assume that $\dim(E^u(\sigma))=1$. Suppose that $\Lambda$ satisfy the shadowing property. Then by Lemma \ref{HL}, we have $HL(p)\neq \emptyset$. Since $dim(E^u(\sigma))=1$ and by the invariance of $W^u(\sigma)$ we have that $$W^u(\sigma)=O(p)\cup \{\sigma\}\cup O(q),$$ where $p$ and $q$ are points belonging to the two distinct components of $W_{loc}^u(\sigma)\setminus \{\sigma\}$. Therefore, there are at most two homoclinic loops on $HL(p)$ and then the proof follows by applying Theorem \ref{fewhomocliniloops}.     
\end{proof}

\begin{proof}[Proof of Corollary \ref{dim3}]
Suppose $\dim(M)\leq 3$ and $\Lambda$ is is chain-recurrent set for $X$ containing an attached hyperbolic singularity $\sigma$. Since $\Lambda$ is non-wandering, $\sigma$ must be a saddle. Therefore $\sigma$ is an index-one singularity and therefore the result follows by applying Theorem \ref{noshadowing}.

\end{proof}

\section{The non-compact case}\label{sectionnoncompact}
In this section we will discuss some aspects of the shadowing property for flows defined on non-compact spaces.  Throughout  this section, we denote $M$ a Riemanninan manifold and $X$ a complete vector field over $M$. In \cite{JNY} it was introduced a new version of shadowing property for flows on non-compact spaces, the so called rescaled-shadowing property. Next we recall this concept. Let us denote the following set

$$C^0_X(M)=\{e:M\to \R; e \textrm{ is continuous, }e(x)\geq0 \textrm{ and } e^{-1}(0)=Sing(X)\}.$$

\begin{definition}
    Given $e\in C^0_X(M)$ we say that a sequence $(x_i,t_i)_{a}^{b}$ ($-\infty\leq a<b\leq \infty$), is  an $e$-$T$-pseudo-orbit if $t_i\geq T$ and $$d(X_{t_i}(x_i),x_{i+1})\leq e(x_i),$$ for every $a\leq i \leq b$.
\begin{enumerate}
    
\item  We say that a $d$-$T$-pseudo-orbit $(x_i,t_i)$ is \textit{$e$-shadowed} by $y\in M$ if there exists $h\in Rep$  such that $$d(X_{h(t)}(y),X_{t}(x_i))\leq e(X_{t}(x_i)),$$ for every $t\in [s_i,s_{i+1})$. 
\item We say that a $d$-$T$-pseudo-orbit $(x_i,t_i)$ is \textit{strongly $e$-shadowed} by $y\in M$ if there exists $h\in Rep_{e(x_0)}$  such that $$d(X_{h(t)}(y),X_{t}(x_i))\leq e(X_{t}(x_i)),$$ for every $t\in [s_i,s_{i+1})$.

\end{enumerate}

\end{definition}

\begin{definition}[Rescaled Shadowing property]
     Let $\Lambda$ be a closed and invariant subset of $M$.
     \begin{enumerate}
         \item  We say that $\Lambda$  has the rescaled-shadowing  property if for every $e\in C^0_X(M)$, there is some $d\in C^0_X(M)$ such that every $d$-$1$-pseudo-orbit in $\Lambda$ is $e$-shadowed  by some point in $\Lambda$. We say that $X$  has the global rescaled-shadowing property  if $M$ has the rescaled-shadowing property.
         \item  We say that $\Lambda$  has the strong rescaled-shadowing  property if for every $e\in C^0_X(M)$, there is some $d\in C^0_X(M)$ such that every $d$-$1$-pseudo-orbit in $\Lambda$ is strongly $e$-shadowed  by some point in $\Lambda$. We say that $X$  has the global strong rescaled-shadowing property  if $M$ has the global strong rescaled-shadowing property.
    \end{enumerate}
    \end{definition}

\begin{rmk}
The previous definition is actually weaker than the definition presented in \cite{JNY}. Indeed, in \cite{JNY}  the rescaled-shadowing property was introduced by requiring  strongly $e$-shadowing instead of $e$-shadowing. So, in our terminology the concept introduced in \cite{JNY} is called  strong rescaled-shadowing property. As we noticed in Section \ref{prel}, strong shadowing and shadowing are actually different concepts and therefore we would like to point out that this difference can also persist for the rescaled-shadowing property. Nevertheless, since the strong rescaled-shadowing property implies the rescaled-shadowing property, if we prove that some set does not satisfy the rescaled-shadowing property, then the same result will hold for the strong rescaled-shadowing property. 
\end{rmk}

We say that $x\in M$ is  \textit{$R$-chain-recurrent} if for every $e\in C_X^0(M)$ there is an $e$-$1$-pseudo orbit such that $(x_i,t_i)_{0}^N$ satisfying $x_0=x_N=x$.  A  closed and invariant set  $\Lambda\subset M$ is \textit{$R$-chain-recurrent} if every point $x\in \Lambda$ is $R$-chain-recurrent.

Now, remember that since we are working with complete $C^1$-vector fields on $M$, then $Sing(X)$ is a closed subset of $M$. In particular, it is possible to define a function $e\in C_X^0(M)$ such that $B_{e(x)}(x)\cap Sing(X)=\emptyset$, for every regular point $x$. Thus, if $(x_i,t_i)$ is an $e$-$T$-pseudo orbit and $x_i\in Sing(X)$, for some $i$, then $x_i\in Sing(X)$, for every $i$. This last observation represents an obstruction for reproducing the same techniques used to obtain Theorem \ref{fewhomocliniloops} in the rescaled-shadowing scenario. Indeed, if the function $d\in C_X^0(M)$ given by the rescaled-shadowing property is too small, then we cannot jump from a regular point to  a singularity. This makes impossible to construct homoclinic loops in the same way we did in Section \ref{compact case}. To overcome this obstacles we need to use a refinement of the Hartman-Grobman  theorem. The construction that we are going to perform here is inspired on the work \cite{SYY}, where the authors introduced a singular cross-section in a neighborhood of a hyperbolic singularity. Their results assume that $M$ is compact and give us a rich qualitative and quantitative portrait of the dynamics of the flow nearby a hyperbolic singularity. Here we are going to reproduce some of their ideas to the non-compact setting.

As we have seen in Section \ref{prel}, since $M$ is a Riemmanian manifold, for every $x\in M$, there is a number  $\eps_{x}>0$ such that $\exp_{x}: B_{\eps_{x}}(0)\to \exp_{x }(B_{\eps_{x}}(0))$ is a local diffeomorphism. Nevertheless, since we are not in the compact case, this number $\eps_x$ depends of $x$.  Since we will suppose that $\Lambda$ is non-wandering and $\sigma$ is attached to $\Lambda$, then from now on we are going to suppose that $\sigma$ is a saddle.  Let us denote $D_{r}(\si)=\exp_{\sigma}(\mathcal{D}_{\si}\cap B_{r}(0))$, where $$\mathcal{D}_{\si}=\{v\in T_{\si}M; |v^s|=|v^u|\}.$$ 
Fix $0<r_0<\eps_{\sigma}$ such that $B_r(\sigma)$ is a Hartman-Grobman neighborhood of $\sigma$, for every $0<r<r_0$. Fix $0<r<r_0$ and $K>0$ such that $e^{-n}<r$, for every $n\geq K$. Let us denote $$A_n=D_r(\sigma)\cap(B_{e^{-n}}(\sigma)\setminus B_{e^{-n-1}}(\sigma)).$$ 

Note that if $n\neq m$, then $A_n$ and $A_m$ are disjoint. The next lemma will be a key tool to our arguments.

\begin{lemma}
 If $x\in D_r(\sigma)$ is a regular point, then $X_{[-t^-_x,t^+_x]}(x)$ intersects $D_r(\sigma)$ only once.     
\end{lemma}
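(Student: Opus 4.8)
The plan is to work in the linearizing (Hartman--Grobman) coordinates on the box neighborhood $B_r(\sigma)$, where the flow is conjugate to the linear flow $DX_t$ with a hyperbolic splitting $T_\sigma M = E^s\oplus E^u$. Identify a point near $\sigma$ with a pair $(v^s,v^u)$, and recall that $D_r(\sigma)$ is by definition the image under $\exp_\sigma$ of the set $\{|v^s| = |v^u|\}$ intersected with the $r$-box. First I would translate the claim into the linear model: I need to show that a regular orbit segment of $DX_t$ staying inside the box, from the time it last crossed $\{|v^s|=|v^u|\}$ going backward (time $-t^-_x$) to the time it next crosses going forward (time $t^+_x$), meets $\{|v^s|=|v^u|\}$ exactly once, namely at $x$ itself.

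The key step is a monotonicity observation. Along a regular orbit of the linear flow, $|v^s(t)|$ is (strictly) decreasing in $t$ while $x$ has nonzero stable component, and $|v^u(t)|$ is (strictly) increasing in $t$ while $x$ has nonzero unstable component; more precisely, by the hyperbolicity estimates $|{DX_t}|_{E^s}|\le Ce^{-\lambda t}$ and $|{DX_{-t}}|_{E^u}|\le Ce^{-\lambda t}$ (and, after possibly adapting the metric, strict exponential contraction/expansion), the function $t\mapsto |v^u(t)|$ is strictly increasing and $t\mapsto |v^s(t)|$ is strictly decreasing whenever the corresponding component is nonzero. A point $x\in D_r(\sigma)$ that is regular cannot lie on $W^s_{loc}(\sigma)$ or $W^u_{loc}(\sigma)$ (if it did, one of $v^s,v^u$ would vanish, forcing the other to vanish too by $|v^s|=|v^u|$, i.e. $x=\sigma$, contradicting regularity). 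Hence both components are nonzero along the whole segment, so the quotient $\phi(t) = |v^u(t)|/|v^s(t)|$ (or the difference $|v^u(t)|^2-|v^s(t)|^2$) is strictly monotone increasing in $t$. The crossings of $D_r(\sigma)$ correspond exactly to the zeros of $|v^u(t)|-|v^s(t)|$, equivalently to $\phi(t)=1$; a strictly monotone function attains the value $1$ at most once, and it does so at $t=0$ since $x\in D_r(\sigma)$. This gives the uniqueness.

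So the steps in order are: (1) pass to Hartman--Grobman coordinates on $B_r(\sigma)$ and reduce to the linear flow, being careful that $D_r(\sigma)$, $t^+_x$, $t^-_x$ are all defined there; (2) observe $x$ regular in $D_r(\sigma)$ forces $v^s\ne 0$ and $v^u\ne 0$, hence both stay nonzero on the whole segment (since neither the stable nor the unstable subspace is left invariant away from $0$ in a way that would make a component vanish in finite time); (3) after adapting the Riemannian metric near $\sigma$ so that $E^s$ is strictly contracted and $E^u$ strictly expanded, conclude that $t\mapsto |v^u(t)|^2 - |v^s(t)|^2$ is strictly increasing along the segment; (4) deduce it has a unique zero, located at $t=0$, so $X_{[-t^-_x,t^+_x]}(x)\cap D_r(\sigma)=\{x\}$.

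The main obstacle I anticipate is step (3): the bare hyperbolicity estimates in the definition only give $\le Ce^{-\lambda t}$ with a constant $C$ possibly bigger than $1$, which is not by itself enough to force $|v^s(t)|$ to be \emph{monotone} — only eventually small. The clean fix is to replace the metric near $\sigma$ by an adapted (Lyapunov) metric in which $\tfrac{d}{dt}|v^s(t)|<0$ and $\tfrac{d}{dt}|v^u(t)|>0$ pointwise for nonzero components; since $D_r(\sigma)$, being a topological object defined through $\exp_\sigma$ and the splitting, and the conclusion (a single intersection) are metric-independent in the relevant qualitative sense, this is harmless. Alternatively, one argues directly with the linear flow: writing $v^s(t)=e^{tA^s}v^s$, $v^u(t)=e^{tA^u}v^u$ with $A^s$ having spectrum in the left half-plane and $A^u$ in the right half-plane, the ratio $|v^u(t)|/|v^s(t)|\to\infty$ as $t\to+\infty$ and $\to 0$ as $t\to-\infty$, and one shows it is strictly monotone on the segment by differentiating; this is a routine but slightly fiddly linear-algebra computation, which in the interest of brevity I would cite to the linearization/adapted-metric discussion rather than grind through.
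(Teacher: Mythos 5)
Your proposal follows essentially the same route as the paper: pass to Hartman--Grobman coordinates and use the strict monotonicity of $|v^u(t)|$ versus $|v^s(t)|$ along a regular orbit of the linear flow to conclude that the locus $|v^s|=|v^u|$ is crossed exactly once. In fact you are more careful than the paper's proof, which simply asserts the monotonicity without addressing the constant $C$ in the hyperbolicity estimates; your fix via an adapted (Lyapunov) metric, or a direct computation for the linear flow, is exactly the right way to make that step rigorous.
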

\begin{proof}
    This lemma is a simple consequence of the Hartman-Grobman theorem. Indeed,  since $B_r(\sigma)$ is a Hartman-Grobman neighborhood of $\sigma$, there is a neighborhood  $U$ of $0$ in $T_{\sigma}M$ such that if the dynamics of $X$ in $V$ is conjugated to the dynamics of $DX_{\sigma}$ in $U$. Now let $v\in U$ and write $v=v^s+v^u$. Since $\sigma$ is a saddle, we have that $|v^s|$ decreases and $|v^u|$ increases in the future, as well as $|v^s|$ increases and $|v^u|$ decreases in the past. Since for any $x\in D_r(\sigma)$ one has that $v=\exp_{\sigma}^{-1}(x)$  satisfies $|v^s|=|v^u|$, then the lemma holds.    
\end{proof}

Fix some $n\geq K$ and  denote $$V_n=\bigcup_{x\in A_n} X_{[-t_x^-,t^+_x]}(x).$$ 
The sets $V_n$ form bands that in their turn  form a partition of $V\setminus (W^s_{loc}(\sigma)\cup W^u_{loc}(\sigma)$ with the following property: The orbit of every point in the interior of $V_n$ must cross $D(\sigma)$ exactly in $A_n$ before to leave $V$ in positive or negative time. 

\begin{figure}[H]\label{bands}
     \centering
     \includegraphics[scale=0.6]{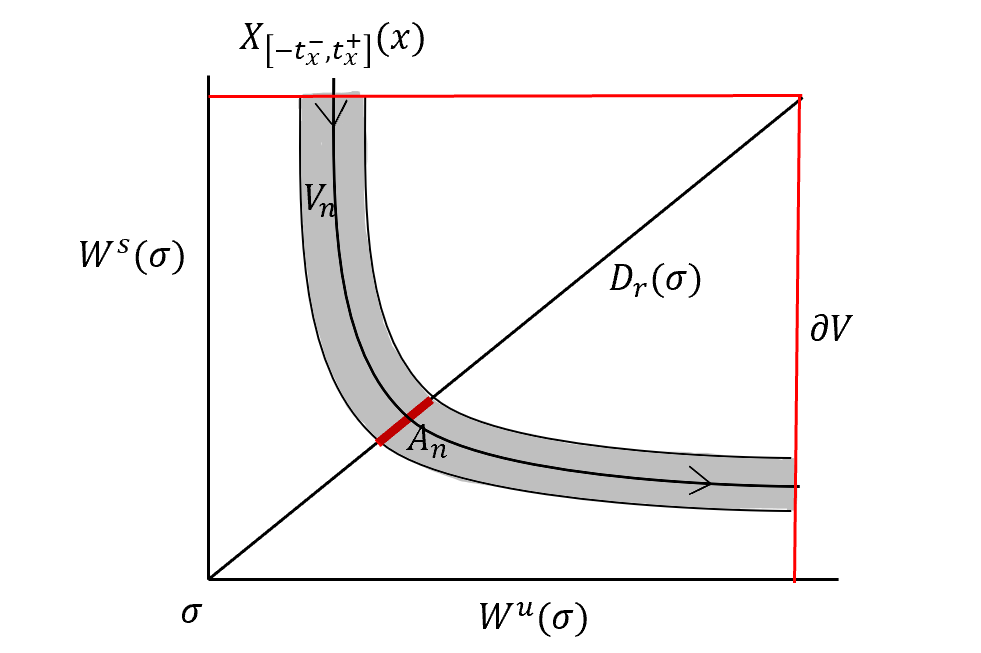}
     \caption{The Bands $V_n$.}
     \label{Bands2}
 \end{figure}

This idea will be central to the constructions that we are going to perform in the proofs of the next lemmas.

\begin{lemma}\label{Rnonwand}
Let $\Lambda$ be a R-chain-recurrent set for $X$ and suppose $\Lambda$ satisfies the  rescaled-shadowing property. Then $\Lambda$ is non-wandering.
\end{lemma}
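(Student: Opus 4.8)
The plan is to mimic the structure of the proof of Lemma \ref{nonwand}, replacing ordinary $\delta$-$1$-pseudo-orbits by $e$-$1$-pseudo-orbits and using the $R$-chain-recurrence hypothesis to produce a periodic-type pseudo-orbit through any given point. Fix $x\in\Lambda$, a neighborhood $U$ of $x$ in $\Lambda$, and $T>0$; we must produce $y\in U$ and $t>T$ with $X_t(y)\in U$. First I would choose $e\in C^0_X(M)$ so that $B_{e(z)}(z)\cap\Lambda\subset U$ whenever $z\in\Lambda$ is close enough to $x$ (shrinking $e$ near $x$ as needed), and let $d\in C^0_X(M)$ be the function supplied by the rescaled-shadowing property for this $e$. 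By $R$-chain-recurrence of $x$ there is a finite $d$-$1$-pseudo-orbit $(x'_i,t'_i)_0^k$ with $x'_0=x'_k=x$. Concatenating infinitely many copies of this loop (as in Lemma \ref{nonwand}, setting $x_{qk+r}=x'_r$, $t_{qk+r}=t'_r$) gives a bi-infinite $d$-$1$-pseudo-orbit in $\Lambda$; note that because $d(x'_{k-1},\cdot)$-jump lands back on $x'_0=x'_k=x$, the wrap-around jumps are legitimate $d$-jumps, so the concatenation is genuinely a $d$-$1$-pseudo-orbit.

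Next, apply the rescaled-shadowing property: there is $z\in\Lambda$ and $h\in Rep$ with $d(X_{h(t)}(z),X_t(x_i))\le e(X_t(x_i))$ on each interval $[s_i,s_{i+1})$. Evaluating at the times $s_{qk}$ (for which $x_{qk}=x$ and $X_0(x)=x$) yields $d(X_{h(s_{qk})}(z),x)\le e(x)$ for every $q\in\Z$, and in particular $d(z,x)\le e(x)$ for $q=0$. Thus all the points $w_q:=X_{h(s_{qk})}(z)$ lie in $B_{e(x)}(x)\cap\Lambda\subset U$. Since $h$ is an increasing homeomorphism of $\R$ and $s_{qk}=q\,s_k\to+\infty$ as $q\to+\infty$ (because each $t'_i\ge 1$, so $s_k\ge k\ge 1$), we may pick $q$ large enough that $h(s_{qk})-h(s_0)=h(q s_k)>T$. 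Then $y:=z$ (or rather $w_0$, shifted appropriately) and the time difference $h(q s_k)$ witness that the $X$-orbit returns to $U$ after time exceeding $T$: more precisely, $w_0=X_{h(0)}(z)\in U$ and $w_q=X_{h(qs_k)}(z)=X_{h(qs_k)-h(0)}(w_0)\in U$ with $h(qs_k)-h(0)>T$. Hence $x$ is non-wandering in $\Lambda$, and since $x$ was arbitrary, $X|_\Lambda$ is non-wandering.

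The only genuinely new point compared to Lemma \ref{nonwand} is that the error function $e$ is now point-dependent and must vanish exactly on $Sing(X)$; the main thing to check is therefore that $e$ can be chosen with $B_{e(z)}(z)\cap\Lambda\subset U$ near $x$ while still belonging to $C^0_X(M)$. If $x$ is a regular point this is routine since one only needs $e$ small near $x$ and one is free to let $e$ be large elsewhere. If $x\in Sing(X)$ then $e(x)=0$ is forced, but then $x$ is a fixed point of the flow and is trivially non-wandering, so that case needs no argument. I expect this last bookkeeping about $C^0_X(M)$-membership of $e$ to be the only place requiring a little care; the rest is a verbatim transcription of the compact-case argument.
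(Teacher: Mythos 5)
Your proposal is correct and follows essentially the same route as the paper's (much terser) proof: take a finite $d$-$1$-loop through $x$ given by $R$-chain-recurrence, concatenate it into a bi-infinite pseudo-orbit, apply rescaled shadowing, and read off returns of the shadowing orbit to $U$ at arbitrarily large times. Your extra bookkeeping (validity of the wrap-around jumps, choice of $e$ in $C^0_X(M)$, and the trivial singular case) only fills in details the paper leaves implicit.
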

\begin{proof}
The proof of this lemma is pretty similar to the proof  Lemma \ref{nonwand}. Suppose $\Lambda$ is a R-chain-recurrent set for $X$ and let $x\in \Lambda$. Fix $U$ a neighborhood of $x$ in $\Lambda$ and Let $e\in C_X^0(M)$ such that $(B_{2e(x)}(x)\cap \Lambda)\subset U$. Let $d\in C_X^0(M)$ be given by the rescaled-shadowing property and suppose $d(x)\leq e(x)$. Let $(x_i,t_i)_{0}^N$ be a $d$-$T$-pseudo orbit such that $x_0=x_N=x$. The shadowing property of $\Lambda$ implies there exists a point $z\in \Lambda$ such that $z,X_{h(s)}(z)\in U$.  
\end{proof}

\begin{lemma}\label{supoints}
Let $\sigma$ be a hyperbolic singularity for $X$ and suppose $p_n$ is a set of non-wandering points such that $p_n\to \sigma$. Suppose $V=B_{r}(\sigma)$, whith $0<r\leq r_0$. Then there there is a sequence of points $q_n\in D_r(\sigma)$ and positive times $u_n\leq t^+_{q_n}$ and $s_n> t^+_{q_n}$ such that $q_n\to \sigma$, $X_{u_n}(q_n)$ accumulates in $W^u_{loc}(\sigma)$ and  $X_{s_n}(q_n)$ accumulates in $W^s_{loc}(\sigma)$
\end{lemma}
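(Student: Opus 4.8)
The plan is to run, inside a Hartman--Grobman box $V=B_r(\sigma)$, the same kind of scheme as in Lemmas \ref{nonwand} and \ref{HL}, but now keeping track of \emph{where} on $\partial V$ a recurrent orbit enters and leaves. Recall that in the linearizing coordinates on $V$ the stable coordinate contracts and the unstable coordinate expands under the forward flow, and, as in the Lemma preceding this statement, any regular orbit arc inside $V$ that is not contained in $W^s_{loc}(\sigma)\cup W^u_{loc}(\sigma)$ crosses $D_r(\sigma)$ exactly once. First I normalize the $p_n$: each is regular (a hyperbolic singularity is isolated), and after passing to a subsequence either all $p_n$ avoid $W^s_{loc}(\sigma)\cup W^u_{loc}(\sigma)$ or all lie on one of them. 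In the main case (avoiding both) I replace $p_n$ by the unique point where $O(p_n)$ meets $D_r(\sigma)$; this point is again non-wandering (the non-wandering set is invariant) and still converges to $\sigma$ (on $D_r(\sigma)$ the distance to $\sigma$ is squeezed between the two coordinate norms, which are at most those of $p_n$). So from now on $p_n\in D_r(\sigma)$, and in particular the exit time $t^{+}_{p_n}$ is a fixed finite number, tending to $+\infty$ with $n$.

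Now fix $n$. Choose a threshold $T_n>t^{+}_{p_n}+1$, and then a radius $\delta_n$ small enough (relative both to $d(p_n,\sigma)$ and to the modulus of continuity of the Hartman--Grobman conjugacy) that every point within $\delta_n$ of $p_n$ lies off $W^s_{loc}(\sigma)\cup W^u_{loc}(\sigma)$ and has exit time within $1$ of $t^{+}_{p_n}$. Since $p_n$ is non-wandering there are $y_n$ with $d(y_n,p_n)<\delta_n$ and $\tau_n>T_n$ with $d(X_{\tau_n}(y_n),p_n)<\delta_n$. Let $q_n=X_{b_n}(y_n)$ be the unique point of the $V$-sojourn of $y_n$ lying on $D_r(\sigma)$; by the choice of $\delta_n$ we get $|b_n|<1$, while $q_n\to\sigma$ and $t^{+}_{q_n}<t^{+}_{p_n}+1<T_n<\tau_n$.

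It remains to produce the two times. Put $u_n:=t^{+}_{q_n}-1<t^{+}_{q_n}$: since $q_n\in D_r(\sigma)$ with $q_n\to\sigma$, a direct computation with the linearizing conjugacy gives that $X_{u_n}(q_n)$ has unstable coordinate bounded away from $0$ and from $r$ but stable coordinate tending to $0$, so the sequence $\bigl(X_{u_n}(q_n)\bigr)_n$ accumulates in $W^u_{loc}(\sigma)$. Put $s_n:=\tau_n-b_n$: then $X_{s_n}(q_n)=X_{\tau_n}(y_n)$ lies within $\delta_n+d(p_n,\sigma)$ of $\sigma$, hence $X_{s_n}(q_n)\to\sigma\in W^s_{loc}(\sigma)$, and $s_n=\tau_n-b_n>\tau_n-1>t^{+}_{q_n}$, as needed. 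The two cases where the $p_n$ lie on a local invariant manifold are handled by running the same construction on the witnesses $y_n$ rather than on $p_n$: a size estimate shows such a $y_n$ must still lie off that manifold, and its forward orbit must still leave $V$ before time $\tau_n$ (otherwise, because a point of $V$ can only move monotonically away from $\sigma$ once it has passed $D_r(\sigma)$, $X_{\tau_n}(y_n)$ could not return $\delta_n$-close to $p_n$), so that only the sign of $b_n$ changes.

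The step I expect to be the genuine obstacle is exactly the comparison $s_n>t^{+}_{q_n}$. This is what forces the order of the quantifiers: one must first push $p_n$ onto $D_r(\sigma)$ so that the local scale, and with it $t^{+}_{q_n}$, is pinned down, and only then may one choose the non-wandering threshold $T_n$ large; and one must simultaneously rule out the degenerate scenario in which the non-wandering of $p_n$ is witnessed only by orbits creeping along very close to $W^s_{loc}(\sigma)$ and never leaving $V$ before they return.
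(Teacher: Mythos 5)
Your construction of $q_n$ and of $u_n$ follows essentially the paper's line (push the non-wandering points onto the singular cross-section $D_r(\sigma)$, use the non-wandering property to get a returning orbit, and locate the exit point via the Hartman--Grobman conjugacy), and that half of the argument is sound. The genuine gap is in your choice of $s_n$. You set $s_n=\tau_n-b_n$, so that $X_{s_n}(q_n)=X_{\tau_n}(y_n)$, a point within $\delta_n$ of $p_n$; since $p_n\to\sigma$, this gives $X_{s_n}(q_n)\to\sigma$. Such a sequence ``accumulates in $W^s_{loc}(\sigma)$'' only in the degenerate sense that its limit is $\sigma$ itself (by the same token it accumulates in $W^u_{loc}(\sigma)$). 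This makes the second conclusion of the lemma vacuous and destroys the only use made of it: in Lemma \ref{HLNC} the accumulation point $x_2\in W^s_{loc}(\sigma)$ is flowed backward onto $\partial V$ (``by the invariance of $\Lambda$, we can suppose $x_2\in\partial V$''), which is impossible when $x_2=\sigma$. The lemma is really asserting the existence of a \emph{regular} accumulation point on the local stable manifold, and your $s_n$ does not produce one.

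The missing idea is to take for $s_n$ not the return time to the section but the \emph{re-entry time of the orbit into $V$}: in your notation, $s_n=(\tau_n-b_n)-t^-_{X_{\tau_n-b_n}(q_n)}$ (this is the paper's $s_n=s'_n-t^-_{X_{s'_n}(q_n)}$). Since the orbit of $q_n$ leaves $V$ at time $t^+_{q_n}$ and is later found at $X_{\tau_n}(y_n)$, deep inside $V$ and $\delta_n$-close to $p_n\to\sigma$, it must re-enter through $\partial V$ at a point whose forward sojourn in $V$ before reaching $D_r(\sigma)$ becomes arbitrarily long as $n\to\infty$; by the Hartman--Grobman conjugacy, such entry points converge to the compact set $W^s_{loc}(\sigma)\cap\partial V$ of regular points, which is the nontrivial accumulation the lemma needs. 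With this choice the inequality $s_n>t^+_{q_n}$ that you singled out as the main obstacle is automatic (re-entry happens after exit), so no threshold juggling is required; as a minor aside, your own chain $\tau_n-1>t^+_{q_n}$ needs $T_n>t^+_{p_n}+3$ rather than $T_n>t^+_{p_n}+1$, since $t^+_{q_n}$ may exceed $t^+_{p_n}+1$ by $|b_n|$.
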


\begin{proof}
To see why the lemma holds fix $V=B_{r}(\sigma)$, where $0<r\leq r_0$ and $B_{r}(\sigma)$. Let $p_n$ be a sequence of regular points such that $p_n\to \sigma$.  We can assume $p_n\in V$, for every $n\geq 0$.  For $n$ big enough we can assume that there is some $i_n\geq i_0$ such that $p_n\in V_{i_n}$. Without loss of generality, we can assume $p_n\in A_{i_n}$. Since $p_n$ is non-wandering, there exists some $q_n\in Int(A_{i_n-1}\cup A_{i_n}\cup A_{i_n+1})$ and $s'_n>t^+_{q_n}$ such that $X_{s'_n}(q_n)\in Int(A_{i_n-1}\cup A_{i_n}\cup A_{i_n
+1})$, where the interior is taken in the relative topology of $D_{\sigma}(r_0)$.  Denote $s_n=s'_n-t^-_{X_{s'_n}(q_n)}$ and $u_n=t^+(q_n)$.

Now, since $X$ is conjugated to $DX_{\sigma}$ in $V$, then $$X_{u_n}(q_n)\to W^u_{loc}(\sigma) \textrm{ and } X_{-t^-_{q_n}}(q_n)\to W^s_{loc}(\sigma),$$ see the Figure \ref{Accumulate}.
\begin{figure}[H]
     \centering
     \includegraphics[scale=0.6]{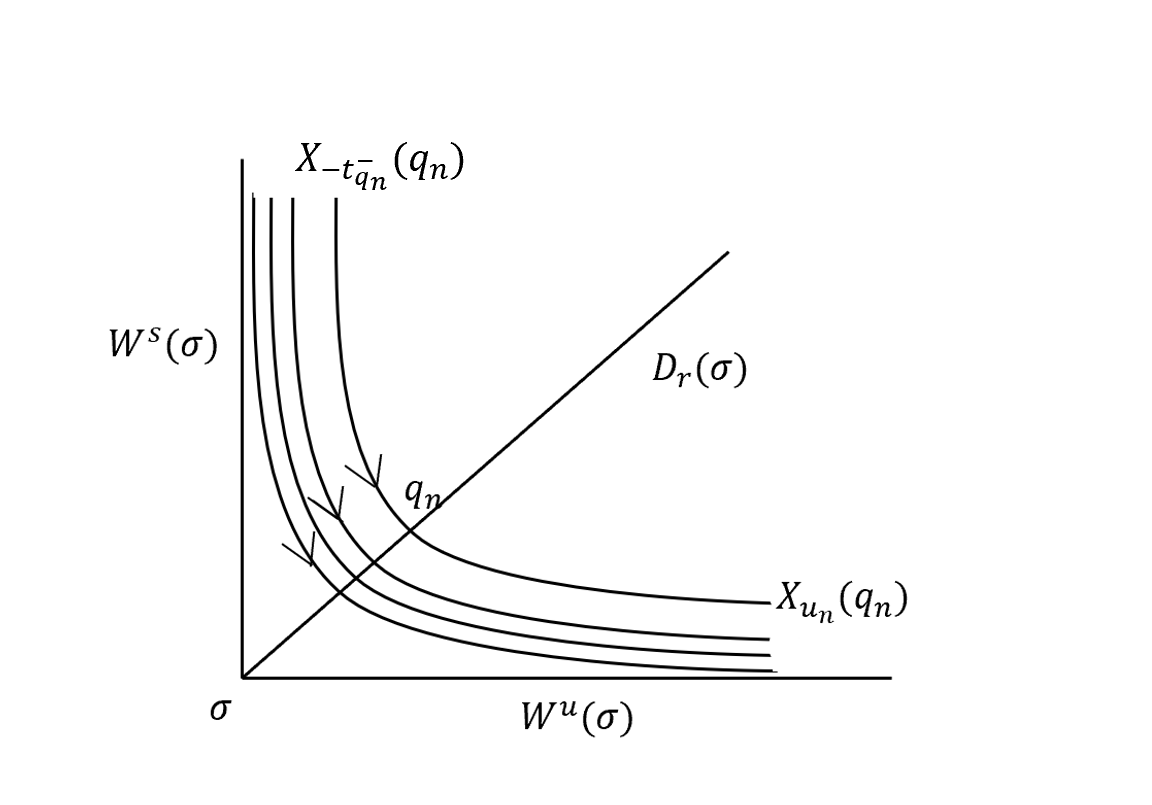}
     \caption{The sequences $X_{u_n}(q_n)$ and $X_{-t^-_{q_n}}(q_n)$ accumulating in $W^u_{loc}(\sigma)$ and $W^s_{loc}(\sigma)$, respectively.}
     \label{Accumulate}
 \end{figure}
 To conclude the proof, note that by construction,  as long as $n\to \infty$, one has $d(X_{-t^-(q_n)}(q_n),X_{s_n(q_n)}(q_n))\to 0$, since $diam((A_{i_n-1}\cup A_{i_n}\cup A_{i_n+1})\to 0$.

\end{proof}

\begin{lemma}\label{HLNC}
Let $\Lambda\subset M$ be a closed invariant set for a complete vector field $X$. Suppose that $\Lambda$ is non-wandering and  contains an attached hyperbolic singularity. If $X$  has the rescaled-shadowing property on $\Lambda$, then there exists a homoclinic loop in $HL(\sigma)$.

\end{lemma}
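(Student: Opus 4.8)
The plan is to adapt the compact-case argument from Lemma \ref{HL} to the rescaled setting, the key difficulty being that an $e$-$1$-pseudo-orbit cannot jump between regular points and singularities when $e$ is small near $\sing(X)$. So instead of routing the pseudo-orbit \emph{through} $\sigma$, I will route it through the bands $V_n$ constructed above, which lie in $V\setminus(W^s_{loc}(\sigma)\cup W^u_{loc}(\sigma))$ and hence stay uniformly away from the singularity. Fix a Hartman--Grobman neighborhood $V=B_r(\sigma)$ with $0<r\le r_0$. Given $e\in C^0_X(M)$, let $d\in C^0_X(M)$ be furnished by the rescaled-shadowing property on $\Lambda$. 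Since $\sigma$ is attached to $\Lambda$ and $\Lambda$ is non-wandering, there is a sequence of regular non-wandering points $p_n\in\Lambda$ with $p_n\to\sigma$; by Lemma \ref{supoints} we obtain points $q_n\in D_r(\sigma)\cap\Lambda$ (after intersecting with $\Lambda$, using invariance and that the $q_n$ are produced from non-wandering points in $\Lambda$) with times $u_n\le t^+_{q_n}<s_n$ such that $q_n\to\sigma$, $X_{u_n}(q_n)\to W^u_{loc}(\sigma)$, $X_{s_n}(q_n)\to W^s_{loc}(\sigma)$, and $d(X_{-t^-_{q_n}}(q_n),X_{s_n}(q_n))\to 0$ as $n\to\infty$.

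Next I would fix $n$ large enough that $d(X_{-t^-_{q_n}}(q_n),X_{s_n}(q_n))$ is smaller than $\min\{d(y):y\in D_r(\sigma)\cap\Lambda\}$ restricted to a compact piece of the cross-section containing the relevant points — here one uses that $d$ is continuous and strictly positive off $\sing(X)$, and that the points in question are bounded away from $\sigma$ for $n$ fixed. With $q:=q_n$ and $w:=X_{s_n}(q)$, both regular, form the periodic-type $d$-$1$-pseudo-orbit
$$(x_i,t_i)=\begin{cases}(q,s_n), & i\ \mathrm{even}\\ (q,s_n), & i\ \mathrm{odd}\end{cases}$$
more precisely the constant sequence $x_i=q$, $t_i=s_n$ for all $i\in\Z$: this is a genuine $d$-$1$-pseudo-orbit because $d(X_{s_n}(q),q)=d(w,q)$, and I must arrange $w$ close to $q$; since $X_{s_n}(q_n)=w_n$ accumulates in $W^s_{loc}(\sigma)$ while $q_n$ accumulates on $\sigma\in W^s_{loc}(\sigma)$, and $D_r(\sigma)$ is transverse to the flow, a minor modification — composing with the short orbit segment $X_{[-t^-_{w_n},0]}$ to land $w_n$ back on $D_r(\sigma)$ near $q_n$ — gives $d(w_n,q_n)\to 0$, so for $n$ large the jump is below $d(q_n)$. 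Thus the constant-at-$q_n$ pseudo-orbit is valid.

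Applying the rescaled-shadowing property to this pseudo-orbit yields $z\in\Lambda$ and $h\in Rep$ with $d(X_{h(t)}(z),X_t(x_i))\le e(X_t(x_i))$ for all $t$ in the respective intervals. As in Lemma \ref{HL}, $z$ is regular (it $e$-shadows the orbit arc $X_{[0,s_n]}(q_n)$, part of which exits $B_r(\sigma)$, so $z$ cannot sit on $\sing(X)$). Running time forward along the copies of $q_n$ for $i\ge 0$: the shadowed orbit stays within $e$ of segments each of which, after the crossing of $A_{i_n}$, enters and re-enters a neighborhood shrinking toward $W^s_{loc}(\sigma)$; a diagonal/limiting argument (or directly, since the pseudo-orbit is eventually trapped in an adapted box whose only invariant behavior compatible with staying near $W^s_{loc}$ forces convergence) shows $X_{h(t)}(z)\to\sigma$ as $t\to+\infty$, so $z\in W^s(\sigma)$. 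Symmetrically, running $i\le 0$ and using $X_{-t^-_{q_n}}(q_n)\to W^u_{loc}(\sigma)$ gives $z\in W^u(\sigma)$. Hence $z\in W^s(\sigma)\cap W^u(\sigma)$ is a regular point, i.e.\ $O(z)\in HL(\sigma)$, so $HL(\sigma)\neq\emptyset$. The main obstacle, and the place where care is genuinely needed, is the second paragraph: constructing a \emph{valid} $d$-$1$-pseudo-orbit that never jumps onto $\sigma$ yet still forces the shadow into $W^s(\sigma)\cap W^u(\sigma)$ — this is exactly why the bands $V_n$ and the quantitative estimate $\operatorname{diam}(A_{i_n-1}\cup A_{i_n}\cup A_{i_n+1})\to 0$ from Lemma \ref{supoints} are indispensable, replacing the trivial "jump to $\sigma$" move available in the compact case.
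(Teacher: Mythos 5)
There is a genuine gap, and it is fatal to the conclusion rather than just a technical oversight. Your pseudo-orbit is the \emph{periodic} one obtained by repeating the recurrent segment of $q_n$ forever in both time directions. Each copy of that segment leaves the Hartman--Grobman neighborhood $V=B_r(\sigma)$ (it exits near $W^u_{loc}(\sigma)\cap\partial V$ and re-enters near $W^s_{loc}(\sigma)\cap\partial V$). Consequently any point $z$ that $e$-shadows it, with $e$ small compared to $r$, must itself leave a fixed neighborhood of $\sigma$ infinitely often in forward time and infinitely often in backward time. Such a $z$ cannot lie in $W^s(\sigma)\cap W^u(\sigma)$: membership in $W^s(\sigma)$ forces the forward orbit to remain in $V$ eventually. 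Your claim that ``the shadowed orbit \dots enters and re-enters a neighborhood shrinking toward $W^s_{loc}(\sigma)$'' is false for a constant pseudo-orbit -- the segments do not shrink, they are all the same segment, and no diagonal argument extracts convergence to $\sigma$ from a single shadowing point of a single periodic pseudo-orbit. What the paper actually does is make the two infinite tails of the pseudo-orbit \emph{genuine orbits} inside the invariant manifolds: for $i\le 0$ it follows the true backward orbit of a point $x_0\in W^u(\sigma)$ (which converges to $\sigma$ with no jumps, so the smallness of $d$ near $\sigma$ is irrelevant there), and for $i\ge 2$ the true forward orbit of a point $x_2\in W^s(\sigma)$; the only two jumps occur on $\partial V$, where $d_0=\inf\{d(x):x\in\partial V\}>0$ by compactness of $\partial V$ and absence of singularities there, and Lemma \ref{supoints} supplies the connecting regular segment from near $W^u_{loc}(\sigma)\cap\partial V$ back to near $W^s_{loc}(\sigma)\cap\partial V$. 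This is the idea your proposal is missing.

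A secondary, in principle repairable, issue: even the validity of your constant $d$-$1$-pseudo-orbit is not established. You need the return jump to satisfy $d(X_{s'_n}(q_n),q_n)\le d(q_n)$, and you argue this from $\operatorname{diam}(A_{i_n-1}\cup A_{i_n}\cup A_{i_n+1})\to 0$. But that diameter is of order $e^{-i_n}\approx d(q_n,\sigma)$, whereas $d(q_n)\to 0$ at a rate over which you have no control (the rescaled-shadowing function $d$ may vanish to arbitrarily high order at $\sigma$), so the two quantities are not comparable. To get a valid periodic pseudo-orbit one must instead apply the non-wandering property inside a ball of radius smaller than $\tfrac12\inf d$ over a compact set bounded away from $\operatorname{Sing}(X)$. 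Even with that fix, however, the first objection stands and the argument does not produce a homoclinic loop.
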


\begin{proof}

The idea behind the proof of this Lemma is pretty similar to the proof of Lemma \ref{HL}, but we need to be careful  with some details. As we have noticed before, since now we do not have control over the function $d$ given by  the rescaled-shadowing property, we cannot guarantee that we can jump from a regular point to a singularity as we  did in Lemma \ref{HL}. Thus we need to modify our argument. 

Suppose $\Lambda$  is a non-wandering set for $X$  and suppose it  satisfies the rescaled-shadowing property. For the remainder of this proof, all the neighborhoods we are considering here are neighborhoods in $\Lambda$.  Let $\si\in Sing(X)$ be an hyperbolic singularity attached to $\Lambda$. 

 Fix  $0<r_1<\frac{r_0}{2}$  and let $\mathcal{A}=\{A_n\}_{n\geq K}$ be the partition of $D_{\si}(r_1)$ as constructed above. Fix  $0<r_2\leq \frac{r_1}{2}$ such that  $$B_{r_2}(\partial B_{r_1}(\sigma))\cap B_{r_2}(\sigma)=\emptyset.$$  Fix $e\in C_X(M)$ such  that $e(x)<r_2$,  for every $x\in M$. Let $d\in C_X(M)$ be given by the rescaled-shadowing property and suppose $d(x)\leq e(x)$, for every $x\in M$.
 
By  Lemma \ref{supoints} There are  $x'_0\in W^u_{loc}(\sigma)$, $x_2\in W^s_{loc}(\sigma)$, $q_n\to \sigma$, $u_n>0$ and $s_n>0$ such that  $X_{u_n}(q_n)\to x'_0$ and $X_{s_n}(q_n)\to x_2$. Since $\Lambda$ closed then in particular $x'_0,x_2\in \Lambda$.  Furthermore, by the invariance of $\Lambda$, we can suppose that $x'_0,x_1\in \partial V$. Consider $x_0=X_{-1}(x_0')$ and
denote $$d_0=\inf\{d(x); x\in \partial V\}.$$
Note that $d_0>0$,  since $\partial V$ is compact and there are not singularities on $\partial V$. By the continuity of $X_t$, we can find an integer $n_0>K$ such that $q_{n_0}\in V$, $$d(X_{u_{n_0}}(q_{n_0}),x_2)\leq d_0   \textrm{ and } d(X_{s_{n_0}}(q_{n_0},x_0)\leq d_0.$$ Denote $x_1=q_{n_0}$ and consider the following sequence:

 $$(x_i,t_i)=\begin{cases} (X_{i}(x_0),1), i\leq 0\\
(x_1,s_{n_0}), i=1\\
(x_2,1), i\geq2\\
\end{cases}.$$ 

\begin{figure}[H]
     \centering
     \includegraphics[scale=0.3]{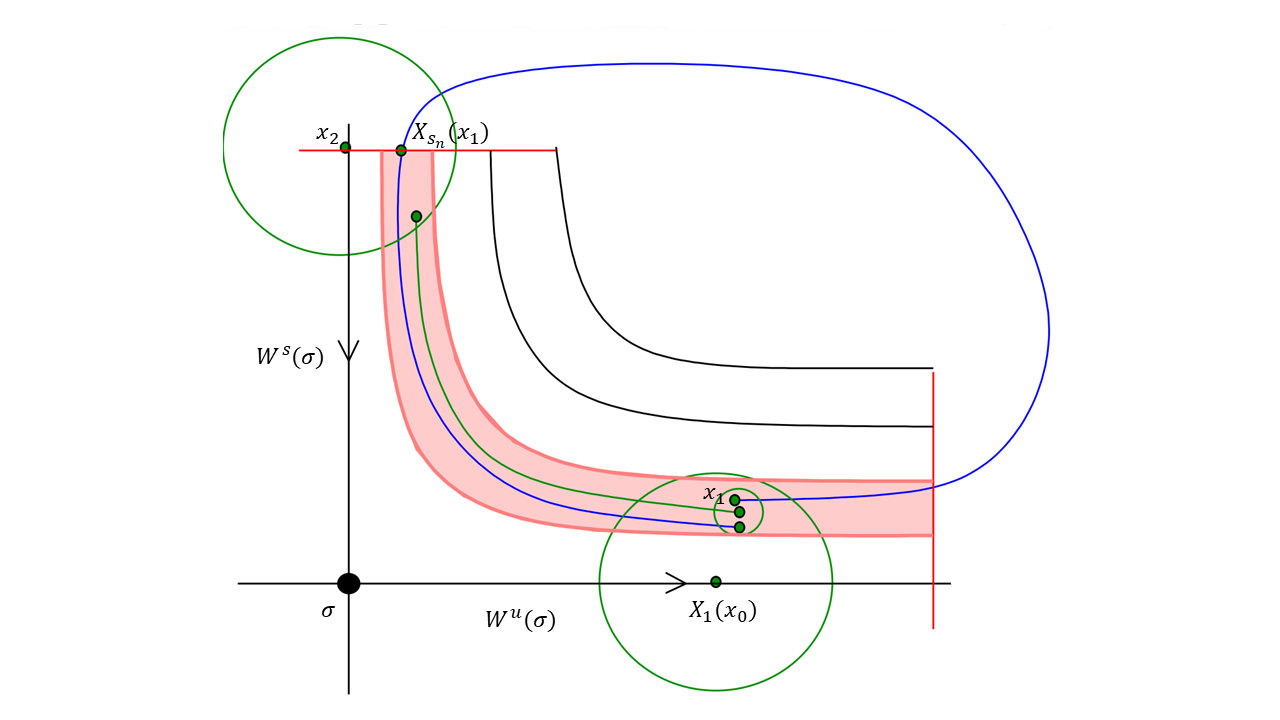}
     \caption{The $d$-$1$-pseudo-orbit $(x_i,t_i)$.}
     \label{pseudo-orbit}
 \end{figure}

The rescaled-shadowing property implies that there is some point $z\in M$ and a $h\in Rep_{\eps}$ such that $$d(X_{h(t)}(z),X_{t-s_1}(x_i))\leq \eps$$ for every $t\in \R$.
Now arguing exactly as in Lemma \ref{HL} we conclude that $z$ is a regular point contained $W^s(\sigma)\cap W^u(\sigma)$ and this finishes the proof.
\end{proof}

\begin{proof}[Proof of Theorem \ref{norshadowing}]
Let $\Lambda$ be a R-chain recurrent set and let $\sigma$ a hyperbolic singularity attached to $\Lambda$. Suppose that $X$ has the recaled-shadowing property on $\Lambda$.

\begin{enumerate}
    \item Suppose that for every adapted neighborhood of $\sigma$ there exists $N>0$ such that $n(p,V,\sigma)\leq N$. The proof that we will perform here is a slightly modification of the proof of the previous lemma. Let us proceed exactly as in the proof the Lemma \ref{HLNC} and define the $d$-$1$-pseudo orbit 

$$(x_i,t_i)=\begin{cases} (X_{i}(x_0),1), i\leq 0\\
(x_1,s_{n_0}), i=1\\
(x_2,1), i\geq2\\
\end{cases}$$ 
Let $N>0$ be such that $N\geq n(p,V,\sigma)$, for every $p\in HL(\sigma)$. Now modify the above $d$-$1$-pseudo orbit  to obtain the following $d$-$1$-pseudo orbit

$$(x_i,t_i)=\begin{cases} (X_{i}(x_0),1), i\leq 0\\
(x_i,s_{n_0}), i=1,...,N+1\\
(x_2,1), i\geq2\\
\end{cases}$$
Let $z\in \Lambda$ be an $e$-shadow for $(x_i,t_i)$. We conclude the proof by noticing that $z\in HL(\sigma)$, but $n(z,V,\sigma)\geq2N$ and this is a contradiction. 

\item Suppose that $\sigma$ has unstable index equal to $1$. Thus by following an argument similar to the argument in the proof of Theorem \ref{noshadowing}, we obtain that $\sigma$ admits at most two homoclinic loops. Therefore the result is an immediate consequence of item $(1)$.   

\item Suppose $\dim(M)\leq 3$. If $X$ has the rescaled shadowing property, then $\Lambda$ is a non-wandering  set. Now, since $\sigma$ is attached to $\Lambda$, the it must be a hyperbolic saddle. Therefore, $\sigma$ has stable or unstable index equal to $1$ and the result is an immediate consequence of item $(2)$.

\end{enumerate}

\end{proof}

\section{Further Discussion}

We end this work by discussing some  questions that remains open on the subject.
In this work we have proved the non-shadowableness of some chain-recurrent sets containing attached hyperbolic singularities.  These results are first steps towards the following general conjecture:

\begin{conj*}
Let $X$ be a $C^1$-vector field over a closed manifold. If $\Lambda\subset M$ is a chain-recurrent set with an attached singularity, then $\Lambda$ does not have the shadowing property.
\end{conj*}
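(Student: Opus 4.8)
The plan is to push the argument behind Theorem~\ref{fewhomocliniloops} through with the hyperbolicity of $\sigma$ replaced by a purely topological substitute. As in Lemma~\ref{nonwand} --- whose proof uses only chain recurrence and shadowing, never hyperbolicity --- one first reduces to the situation $\Lambda=\Omega(X|_\Lambda)$, with $\sigma$ a singularity attached to this non-wandering set. The hyperbolic local model was used in Section~\ref{compact case} only to supply a neighbourhood $V$ of $\sigma$ with the three properties: (i) $\sigma$ is the unique singularity in $\overline V$; (ii) for every $p\in HL(\sigma)$ the crossing number $n(p,V,\sigma)=\#(\partial V\cap O(p))$ is finite; and (iii) a point whose forward (resp.\ backward) semiorbit is eventually trapped in $V$ belongs to $W^s(\sigma)$ (resp.\ $W^u(\sigma)$). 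I would recover (i)--(iii) from Conley theory: when $\sigma$ is an isolated invariant set of $X$, take $V$ to be an isolating block, i.e.\ a compact adapted neighbourhood whose boundary splits into an entrance region and an exit region and with $\operatorname{Inv}(V)=\{\sigma\}$; for such $V$, properties (i)--(iii) hold (e.g.\ (iii) because $\omega(z)\subset\operatorname{Inv}(V)=\{\sigma\}$ forces $z\in W^s(\sigma)$), and the set $HL(\sigma)$ and the numbers $n(p,V,\sigma)$ make sense exactly as before.

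Granting such a block, the rest copies Section~\ref{compact case} with only cosmetic changes: choosing regular points $p_n\in\Lambda$ with $p_n\to\sigma$, using non-wandering-ness to return $p_n$ near itself, and shadowing the pseudo-orbit that rests at $\sigma$, makes the excursion $X_{[0,t_0]}(p_n)$ with $t_0\ge 1$, then rests at $\sigma$ again, yields a regular shadow whose two semiorbits are eventually trapped in $V$; by (iii) it lies in $W^s(\sigma)\cap W^u(\sigma)$, so $HL(\sigma)\ne\emptyset$, and iterating the excursion $N+1$ times as in the proof of Theorem~\ref{fewhomocliniloops} produces, for each $N$, a loop in $HL(\sigma)$ meeting $\partial V$ at least $2N$ times. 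This already contradicts shadowing provided the chosen block admits a uniform bound $N$ on $\{n(p,V,\sigma):p\in HL(\sigma)\}$. So the first real step is a dichotomy: either some isolating block of an attached isolated singularity has uniformly bounded crossing numbers, in which case an isolating-block restatement of Theorem~\ref{fewhomocliniloops} closes the matter; or the crossing numbers along $HL(\sigma)$ are unbounded, and then a sequence $p_k\in HL(\sigma)$ with $n(p_k,V,\sigma)\to\infty$, lying in the closed set $\Lambda$, should accumulate on an orbit meeting $\partial V$ infinitely often, producing inside $V$ a non-trivial invariant set disjoint from $\{\sigma\}$ --- contradicting isolation --- so this branch cannot occur.

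The genuinely hard case, which I expect to be the main obstacle, is that of an attached singularity $\sigma$ that is \emph{not} isolated as an invariant set: $\sigma$ accumulated by other singularities, or by a non-trivial continuum of recurrent orbits lying arbitrarily near it. No isolating block exists then and the crossing-number bookkeeping collapses. One route is to localize and induct: replace $\sigma$ by the maximal invariant set $K$ in a small neighbourhood of it, note that $K\subset\Lambda$ still contains an attached singularity and inherits enough recurrence, and set up an induction on the complexity of $K$. A more direct alternative is to construct, near a non-isolated attached singularity, two $\delta$-close pseudo-orbits that no single genuine orbit can $\epsilon$-shadow, exploiting the vanishing of $X$ at $\sigma$ and at the accumulating singularities to confine any candidate shadow. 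Making either of these precise --- above all, taming the possible accumulation structures of $\operatorname{Sing}(X)$ and of recurrence at $\sigma$ --- is where the bulk of the work lies; the isolated case should follow from the machinery of Section~\ref{compact case} with essentially no new ideas.
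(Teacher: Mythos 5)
The statement you are proving is stated in the paper as an open \emph{conjecture}: the authors prove it only under strong additional hypotheses (Theorem~\ref{fewhomocliniloops} assumes $\sigma$ hyperbolic \emph{and} a uniform bound on the crossing numbers $n(p,V,\sigma)$ over $HL(\sigma)$; Theorem~\ref{noshadowing} assumes $\sigma$ hyperbolic of index one; Corollary~\ref{dim3} assumes $\dim M\le 3$). So there is no proof in the paper to match yours against, and your text is, by your own account, a program rather than a proof: you explicitly defer the non-isolated case (``where the bulk of the work lies''), which alone means the conjecture is not established.

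Beyond the admitted incompleteness, one step you present as settled is genuinely wrong, and it is exactly the point on which the paper itself gets stuck. In your dichotomy for an isolated attached singularity, you claim that if the crossing numbers $n(p_k,V,\sigma)$ are unbounded along $HL(\sigma)$, then the $p_k$ accumulate on an orbit meeting $\partial V$ infinitely often, ``producing inside $V$ a non-trivial invariant set disjoint from $\{\sigma\}$,'' contradicting isolation. This does not follow. Each homoclinic orbit exits $V$ between consecutive crossings of $\partial V$, and so may any accumulation orbit; an orbit that enters and leaves $V$ infinitely often contributes nothing to $\operatorname{Inv}(V)$. Unbounded crossing numbers are therefore perfectly compatible with $\operatorname{Inv}(V)=\{\sigma\}$ (this is only ruled out in the paper when $\dim E^u=1$, where $W^u(\sigma)\setminus\{\sigma\}$ consists of two orbits and $HL(\sigma)$ contains at most two loops). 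The paper explicitly poses, as an open question in its final section, whether a chain-recurrent set with an attached hyperbolic singularity can fail the bounded-crossing hypothesis of Theorem~\ref{fewhomocliniloops}; your argument asserts the answer is no without proof. Until that branch is genuinely closed (and the non-isolated case is handled), the conjecture remains open.
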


This conjecture asserts that the presence of an attached singularity is enough to forbid the shadowableness of a chain-recurrent set. The following question is a general question towards the solution of the conjecture.   

\begin{question}
Are there examples of flows containing a chain-recurrent $\Lambda$ set with an attached singularity such that $\Lambda$ has the shadowing property?
\end{question}

If such an example do exist, the conjecture does not hold, but one could still ask for the non-shadowableness of flows with hyperbolic attached singularities through other techniques. This led us to the following weaker conjecture.

\begin{conj*}
Let $X$ be a $C^1$-vector field over a closed manifold. If $\Lambda\subset M$ is a chain-recurrent set with an attached hyperbolic singularity, then $\Lambda$ does not have the shadowing property.
\end{conj*}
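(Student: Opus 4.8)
A plausible route to this conjecture, building on the methods developed above, runs as follows. Assume $\Lambda$ is chain-recurrent, $\sigma\in\Lambda$ is an attached hyperbolic singularity, and $\Lambda$ has the shadowing property; we seek a contradiction. By Lemma \ref{nonwand}, $\Lambda$ is non-wandering, so $\sigma$ is a saddle, and by Lemma \ref{HL} we have $HL(\sigma)\neq\emptyset$. Fix an adapted box neighborhood $V=B_r(\sigma)$. The pseudo-orbit construction in the proof of Theorem \ref{fewhomocliniloops}, carried out without assuming a uniform bound, still produces for every $k\in\mathbb{N}$ a homoclinic loop $O(p_k)\in HL(\sigma)$ with $n(p_k,V,\sigma)\geq k$; there the bounded-crossing hypothesis is used only to convert this into the desired contradiction. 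When $\sigma$ has index one, $W^u(\sigma)\setminus\{\sigma\}$ has two components, so $HL(\sigma)$ contains at most two loops and the bound is automatic --- this is precisely the case already settled by Theorem \ref{noshadowing}. The genuinely open case is $\min(\dim E^s,\dim E^u)\geq 2$, where both $W^s(\sigma)\setminus\{\sigma\}$ and $W^u(\sigma)\setminus\{\sigma\}$ are connected and there is no a priori bound on the numbers $n(\cdot,V,\sigma)$.

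For that case the plan is a compactness argument. Normalize each $p_k$ to lie on $\partial V$ --- say, the first crossing of $O(p_k)$ with $\partial V$ after the last passage of $O(p_k)$ through the inner box $B_{r/2}(\sigma)$ --- and pass to a subsequence with $p_k\to p_\infty\in\Lambda\cap\partial V$. Since $n(p_k,V,\sigma)\to\infty$, the loops $O(p_k)$ re-enter and leave $V$ arbitrarily many times, and one wants their excursion data --- the finite ordered lists of successive crossings of $O(p_k)$ with $\partial V$, together with the entry and exit directions along $W^s_{loc}(\sigma)$ and $W^u_{loc}(\sigma)$ --- to assemble, in the limit, into a one-sided $\delta$-pseudo-orbit $(x_i,t_i)_{i\geq 0}$ in $\Lambda$ that meets $\partial V$ infinitely often, with excursions converging to homoclinic arcs running from $W^u_{loc}(\sigma)\cap\partial V$ to $W^s_{loc}(\sigma)\cap\partial V$. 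Arguing as in Lemma \ref{HL} at the backward end, any shadow $z$ of this pseudo-orbit would have $O(z)$ asymptotic to $\sigma$ in backward time; and if $z$ were forced, at each passage close to $\sigma$, to track one of these near-boundary homoclinic arcs, then $O(z)$ would be a homoclinic loop meeting $\partial V$ infinitely often, contradicting the adaptedness of $V$.

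The uniform estimates needed to run this limiting argument are exactly those assembled in Section \ref{sectionnoncompact}: the partition of $V\setminus(W^s_{loc}(\sigma)\cup W^u_{loc}(\sigma))$ into the bands $V_n$, the fact that each regular orbit meets $D_r(\sigma)$ exactly once inside $V$, and $\diam A_n\to 0$, which together control the transit times through $V$ and the crossing geometry uniformly in $k$ and make a diagonal extraction possible. The step I expect to be the main obstacle is the last one: ruling out that the shadow $z$ simply escapes along a non-homoclinic orbit, which is perfectly entitled to cross $\partial V$ infinitely often and yields no contradiction. Controlling this appears to require an additional expansiveness-type input near $\sigma$ --- for instance, exploiting the local product structure supplied by the Hartman--Grobman conjugacy (Theorem \ref{HG}) to confine the shadow, near each close passage to $\sigma$, to one of the finitely many directions along which homoclinic loops can enter or leave $V$. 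Supplying this ingredient is what the present techniques do not yet achieve, and is the reason the statement is posed only as a conjecture.
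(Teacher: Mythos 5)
This statement is posed in the paper as an open conjecture; the paper contains no proof of it, only the partial results of Theorems \ref{fewhomocliniloops} and \ref{noshadowing} and Corollary \ref{dim3}. Your text is accordingly not a proof but a research program, and you say so yourself. The parts of your first paragraph that are verifiable against the paper are accurate: Lemma \ref{nonwand} gives non-wandering, Lemma \ref{HL} gives $HL(\sigma)\neq\emptyset$, the pseudo-orbit in the proof of Theorem \ref{fewhomocliniloops} does produce, for each $k$, a homoclinic loop crossing $\partial V$ at least $2k$ times (the bounded-crossing hypothesis is used only to turn this into a contradiction), and the index-one case is exactly the one settled by Theorem \ref{noshadowing} because $W^u(\sigma)\setminus\{\sigma\}$ then has two components, forcing $HL(\sigma)$ to consist of at most two orbits. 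Your identification of $\min(\dim E^s,\dim E^u)\geq 2$ as the genuinely open case is also correct.

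The gap is the one you name, and it is real: nothing in the paper's machinery forces a shadow of your limiting pseudo-orbit to be a homoclinic loop rather than an orbit that merely recurs near $\sigma$ and crosses $\partial V$ infinitely often without ever lying in $W^s(\sigma)\cap W^u(\sigma)$; such an orbit yields no contradiction with adaptedness. The paper's proofs avoid this by inserting the constant tail $(\sigma,1)$ into the pseudo-orbit (or, in Section \ref{sectionnoncompact}, tails along $W^s_{loc}(\sigma)$ and $W^u_{loc}(\sigma)$), which pins the shadow inside $V$ for all large positive and negative times and hence forces it into $W^s(\sigma)\cap W^u(\sigma)$; but that device produces a single loop with many crossings, and the contradiction still requires an a priori bound on $n(\cdot,V,\sigma)$ over $HL(\sigma)$, which is unavailable when both indices are at least $2$. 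There is also a small internal inconsistency in your sketch: you call the limiting pseudo-orbit one-sided, $(x_i,t_i)_{i\geq 0}$, and then argue ``at the backward end''; to run the Lemma \ref{HL} argument in backward time you need a bi-infinite pseudo-orbit. None of this is fatal to the program, but the missing confinement mechanism near $\sigma$ is precisely why the statement remains a conjecture, and your proposal does not supply it.
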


The results developed  in this work  fully solve the previous conjecture when the ambient manifold has dimension smaller than $3$. In general dimension we solved the conjecture when the singularity under consideration is index-one.

On the other hand, our results are strongly supported on Theorem \ref{fewhomocliniloops}. Thus a natural question is if Theorem \ref{fewhomocliniloops} holds for any hyperbolic singularity.

\begin{question}
Is there an example of a flow containing a chain-recurrent $\Lambda$ set with an attached hyperbolic singularity such that the hypothesis of Theorem \ref{fewhomocliniloops} does not fit? 
\end{question}

Even one could find an example, the weaker conjecture will still remain open, since  one could try to solve it by using different techniques.

As we discussed in Section \ref{intro}, in \cite{WW} it was proved the non-shadowableness of sectional-hyperbolic chain-transitive sets with attached singularities. On the other hand, we have seen in Section \ref{prel} that there are several distinct versions of the shadowing property and they are not equivalent in presence of attached singularities. The weakest version among the ones introduced in \cite{K2} is the finite shadowing property. In \cite{ALS}, the authors proved the following result.

\begin{theorem}[\cite{ALS}]
 Let $X$ be a $C^1$-vector field  over a closed manifold $M$. If $\Lambda$ is a sectional-hyperbolic attractor with a unique boundary-type and codimension-one hyperbolic singularity, then $\Lambda$ has not the shadowing property.    
\end{theorem}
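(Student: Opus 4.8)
The plan is to deduce this statement directly from Theorem \ref{noshadowing}; all that has to be done is to verify its two hypotheses, namely that $\Lambda$ is chain-recurrent and that the given singularity $\sigma$ is an attached index-one hyperbolic singularity in the sense of this paper. The chain-recurrence comes for free: a sectional-hyperbolic attractor is, by definition, a transitive attracting set, and transitivity gives chain-transitivity and hence chain-recurrence, so the standing hypothesis of Theorem \ref{noshadowing} on $\Lambda$ is automatically met.

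That $\sigma$ is index-one is merely a translation of the hypothesis: the codimension-one assumption says $\dim W^s(\sigma)=\dim M-1$, equivalently $\dim E^u_{\sigma}=1$, which is exactly the unstable-index-one case treated in Theorem \ref{noshadowing}. That $\sigma$ is attached to $\Lambda$ is automatic for an attractor: since $\dim E^u_{\sigma}=1>0$ the singularity $\sigma$ is a saddle; the unstable manifold of a singularity lying in an attractor is contained in that attractor, so $W^u(\sigma)\subset\Lambda$; and $W^u(\sigma)\setminus\{\sigma\}$ consists of regular points whose $\alpha$-limit set is $\{\sigma\}$, so regular orbits of $\Lambda$ accumulate on $\sigma$. (In particular the \emph{uniqueness} and \emph{boundary-type} assumptions in the statement are stronger than needed here: one attached index-one hyperbolic singularity already suffices.)

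With both hypotheses in place, Theorem \ref{noshadowing} applies verbatim and yields that $X$ has not the shadowing property on $\Lambda$, which is the claim. So the only ``work'' is the bookkeeping above; the genuine content sits in Theorem \ref{noshadowing} and, beneath it, in Theorem \ref{fewhomocliniloops}, whose homoclinic-loop explosion argument turns shadowing plus recurrence near $\sigma$ into an unbounded family of attached homoclinic loops.

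If instead one wanted a proof independent of those theorems — which is presumably closer to the original argument in \cite{ALS}, since that result predates the present techniques — the main obstacle would be reproducing the explosion construction by hand in a cross-section transverse to the Lorenz-like singularity, exploiting the local hyperbolic (Hartman--Grobman) picture there to build a pseudo-orbit whose only possible shadows are forced to cross the boundary of a fixed adapted neighborhood more times than any homoclinic loop of $\sigma$ can. Given Theorems \ref{fewhomocliniloops} and \ref{noshadowing}, however, nothing beyond the routine verification above is required.
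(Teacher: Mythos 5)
First, a framing point: this theorem is not proved in the paper at all. It is quoted verbatim from \cite{ALS} in the ``Further Discussion'' section, so there is no in-paper argument to compare yours against. What can be assessed is whether your reduction to Theorem \ref{noshadowing} actually recovers the cited result. Your bookkeeping is correct as far as it goes: a sectional-hyperbolic attractor is by definition a transitive attracting set, hence chain-recurrent; a codimension-one hyperbolic singularity has $\dim E^u_\sigma=1$ (or symmetrically $\dim E^s_\sigma=1$), hence is index-one in the paper's sense; and since $W^u(\sigma)\subset\Lambda$ for a singularity in an attracting set and $W^u(\sigma)\setminus\{\sigma\}\neq\emptyset$, the singularity is attached. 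So for the statement read literally, with ``shadowing property'' meaning the one defined in Section \ref{sectionstatements} (which allows bi-infinite pseudo-orbits), your derivation is sound, and you are right that the uniqueness and boundary-type hypotheses would then be superfluous.

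The genuine gap is that this literal reading is not the content the paper attributes to \cite{ALS}. The surrounding discussion makes this explicit: the theorem is introduced immediately after the finite shadowing property is singled out as the weakest of Komuro's variants, the heavier hypotheses are explained precisely by how much harder it is ``to prove when a flow has not the \emph{finite} shadowing property,'' and the authors state that ``the techniques we used in this work also do not apply to the setting of finite shadowing... because in all our constructions we always use infinite pseudo-orbits which is forbid in the finite shadowing scenario.'' Since shadowing implies finite shadowing, ruling out finite shadowing is strictly stronger than what Theorem \ref{noshadowing} delivers, and your reduction therefore proves only a weaker statement than the one \cite{ALS} contains; if the cited theorem really were about the infinite version, it would be an immediate corollary of Theorem \ref{noshadowing} and the entire paragraph contrasting it with \cite{WW} and posing Question 3 would be vacuous. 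Your closing sketch of an ``independent'' proof has the same problem: forcing a shadow to cross $\partial V$ unboundedly many times is exactly the infinite-pseudo-orbit explosion of Theorem \ref{fewhomocliniloops}, and it cannot be run with finite chains. So the missing idea is not bookkeeping but the finite-versus-infinite pseudo-orbit distinction, which is the whole reason \cite{ALS} needs the unique boundary-type hypothesis and a different method.
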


The contrast between the number of hypothesis of on the previous result and the hypothesis on the results in \cite{WW} evidences how challenging is to prove when a flow has not the finite shadowing property. Indeed, the techniques used in \cite{ALS} are totally different from the techniques used in \cite{WW}. In addition, the techniques we used in this work also do not apply to the setting of finite shadowing. This is because in all our constructions we always use infinite pseudo-obits which is forbid in the finite shadowing scenario. This lead us to the following question:

\begin{question}
can one prove that flows with a chain-recurrent set containing an attached hyperbolic singularity and without any extra tangent structure do not have the finite shadowing property? What if we assume the singularity is index-one?
\end{question}

This last question has potential to start another conjecture which is implies the two previous one.


\begin{thebibliography}{00}%\geq
\bibitem{ABS} V. S. Afraimovich , V. V. Bykov  and L. P. Shilnikov,   \emph{On the appearance and structure of
the Lorenz attractor.} Dokl. Acad. Sci. USSR,  234, 336–-339 (1977).

\bibitem{AH} N. Aoki and K. Hiraide, \emph{Topological theory of dynamical systems: Recent advances.} North-Holland Mathematical  Library, vol. 52, North-Holland Publishing Co., Amsterdam, (1994).

\bibitem{ALS} A. Arbieto, A. L\'{o}pez and Y. Sanch\'{e}z \emph{Shadowing for codimension one sectional-Anosov flows.}, pre-print, arXiv:2006.14369.


\bibitem{APPV} V. Ara\'{u}jo,  E.R; Pujals, M.J. Pacifico, M. Viana  \emph{Singular-hyperbolic attractors are chaotic.} Trans. A.M.S. 361, 2431--2485 (2009)


\bibitem {Gu1} J. Guckenheimer, \emph{A strange, strange attractor.  The Hopf bifurcation and its applications.} Applied Mathematical Sciences, \textbf{19}.



\bibitem{JNY} W. Jung, N.  Nguyen,Y.  Yang,
\emph{Spectral decomposition for rescaling expansive flows with rescaled shadowing.} Discrete and Continuous Dynamical Systems - A,
volume 40,  pp. 2267--2283, (2020)



\bibitem{K1}
M. Komuro,
{\em Expansive properties of Lorenz attractors} 
Theory of Dynamical Systems and Its Application to Nonlinear Problems, 4--26. World Sci. Publishing, Kyoto.

\bibitem{K2} M. Komuro, \emph{Lorenz attractors do not have the pseudo-orbit tracing property.} J. Math. Soc. Japan, 37 (1985), 489--514.

\bibitem{MP} W. Melo, J. Palis \emph{Geometric Theory of Dynamical Systems.}, Springer Verlag New York Inc. 1982.

\bibitem {Mo3}  Metzger R. J. and Morales C. A., Sectional-hyperbolic systems \textit{Ergodic Theory Dynam. Systems}. \textbf{28} 1587-–1597.



\bibitem {MPP} C. A. Morales, M. J. Pacifico  and E. R. Pujals, \emph{Singular hyperbolic systems.} Proc. Amer. Math. Soc, \textbf{127}, 3393-–3401.

\bibitem {MSM} C. A. Morales and B. San Mart\'in, \emph{Contracting Singular Horseshoe.} Nonlinearity, \textbf{30}, 4208--4219.

\bibitem{Pe} M. M. Peixoto, \emph{Structural stability on two-dimensional manifolds.} Topology 1, 101–-120 (1962)
\bibitem{Py} S. Y. Pilyugin, \emph{Shadowing in dynamical systems.}  Lecture Notes in Mathematics, Vol. 1706, Springer-Verlag, New York (1999).
\bibitem{Py1} S. Y. Pilyugin, \emph{The Space of Dynamical Systems with the $C^0$-Topology.}  Leet. Notes in Math. 1571. Springer-Verlag (1994).
\bibitem{Py2} S. Y. Pilyugin, \emph{Shadowing in Structurally Stable Flows.}  journal of differential equations 140, 238-265 (1997).


\bibitem{Sak1} K. Sakai \title{The $C^{1}$ Uniform Pseudo-Orbit Tracing Property.} TOKYO J. MATH.
VOL. 15, No. 1, 99-109 (1992).

\bibitem{Sak2} K. Sakai \title{Pseudo-orbit tracing property and strong transversality of diffeomorphisms on closed manifolds.} Osaka J. Math. 31(2): 373-386 (1994). 


\bibitem{Sal} L. Salgado \title{Singular hyperbolicity and sectional Lyapunov exponents of various orders.} Proc. Amer. Math. Soc. 147 (2019), 735--749. 

\bibitem{S} S. Smale, (1967), Differentiable dynamical systems, \emph{Bulletin of the American Mathematical Society}, 73(6), 747--817,

\bibitem{SYY}  Y. Shi, F. Yang, J.  Yang, (2022), \emph{A countable partition for singular flows, and its application on the entropy theory.} Isr. J. Math. , 1--55.


\bibitem{WW} X. Wen, L. Wen. \emph{No-shadowing for singular hyperbolic sets with a singularity.} Discrete and Continuous Dynamical Systems, 40(10), 6043--6059, (2020).

\bibitem{Wi} R.F. Williams, \emph{The structure of Lorenz attractors.}  Publications Mathématiques
de l’Institut des Hautes Études Scientifiques, volume 50, página 73-99, (1979).












 
\end{thebibliography}
\end{document}